\providecommand{\customgenericname}{}
\newcommand{\newcustomtheorem}[2]{%
  \newenvironment{#1}[1]
  {%
   \renewcommand\customgenericname{#2}%
   \renewcommand\theinnercustomgeneric{##1}%
   \innercustomgeneric
  }
  {\endinnercustomgeneric}
}
\theoremstyle{plain}
\newtheorem{theorem}{Theorem}[section]
\newtheorem{lemma}[theorem]{Lemma}
\newtheorem{proposition}[theorem]{Proposition}
\newtheorem{corollary}[theorem]{Corollary}
\newtheorem*{corollary*}{Corollary}
\newtheorem*{theorem*}{Theorem}
\newtheorem*{proposition*}{Theorem}
\theoremstyle{definition}
\theoremstyle{remark}
\DeclareMathAlphabet{\mathbbold}{U}{bbold}{m}{n}
\def\bb1{\mathbbold{1}}
\def\bcal{\mathcal{B}}
\DeclareMathOperator\ad{ad}
\DeclareMathOperator\rhs{RHS}
\newcommand{\til}[1]{\widetilde{#1}}
\newtheorem*{remark*}{Remark}
\DeclareDocumentCommand{\adx}{ O{2} O{x_1}  }{\ad_{#2}^{[#1]}}
\begin{document}
\title{Matrix wreath products of algebras and embedding theorems}
\author{Adel Alahmadi\protect\endnotemark[1], Hamed Alsulami\protect\endnotemark[1], S.K. Jain\protect\endnotemark[1]$^,$\protect\endnotemark[2], Efim Zelmanov\protect\endnotemark[3]}
	
	

\address{1. To whom correspondence should be addressed\hfill\break
E-mail: ezelmano@math.ucsd.edu\hfill\break
Author Contributions: A.A., H.A, S. K. J., E. Z. designed research, performed research, and wrote the paper. The authors declare no conflict of interest.}
	
\begin{abstract}
We introduce a new construction of matrix wreath products of algebras that is similar to wreath products of groups. We then use it to prove embedding theorems for Jacobson radical, nil, and primitive algebras. In \S\ref{Section6}, we construct finitely generated nil algebras of arbitrary Gelfand-Kirillov dimension $\geq 8$ over a countable field which answers a question from \cite{8}.
\end{abstract}

\maketitle

\section{Main Results}\label{Section1}

G. Higman, H. Neumann, and B.H. Neumann \cite{13} proved that every countable group embeds in a finitely generated group. The papers \cite{3}, \cite{22}, \cite{23}, \cite{24}, \cite{25} show that some important properties can be inherited by these embeddings. Much of this work relies on wreath products of groups.

Following \cite{13}, A. I. Malcev \cite{20} showed that every countable dimensional associative algebra over a field is embeddable in a finitely generated algebra.

In \S\ref{Section2}, \ref{Section3}, we introduce matrix wreath products of algebras and study their basic properties.

In \S\ref{Section4}, we use matrix wreath products to prove embedding theorems for Jacobson radical algebras.

S. Amitsur \cite{2} asked if a finitely generated algebra can have a non nil Jacobson radical. The first examples of such algebras were constructed by K. Beidar \cite{5}. J. Bell \cite{6} constructed examples having finite Gelfand-Kirillov dimension. Finally, L. Bartholdi and A. Smoktunowicz \cite{4} constructed a finitely generated Jacobson radical non nil algebra of Gelfand-Kirillov dimension 2.

\begin{customthm}{4.1}\label{Theorem1}
An arbitrary countable dimensional Jacobson radical algebra is embeddable in a finitely generated Jacobson radical algebra.
\end{customthm}

\begin{customthm}{4.2}\label{Theorem2}
An arbitrary countable dimensional Jacobson radical algebra of Gelfand-Kirillov dimension $d$ over a countable field is embeddable in a finitely generated Jacobson radical algebra of Gelfand-Kirillov dimension $\leq d+6$.
\end{customthm}

We say that a nil algebra $A$ is \underline{stable nil} (resp. \underline{stable algebraic}) if all matrix algebras $M_n(A)$ are nil (resp. algebraic). The problem of Koethe (\cite{16}, see also \cite{17}) if all nil algebras are stable nil is still open.

\begin{customthm}{4.3}\label{Theorem3}
An arbitrary countable dimensional  stable nil algebra $A$ is embeddable in a finitely generated stable nil algebra. If $GK\dim A =d<\infty$ and the ground field is countable, then $A$ is embeddable in a finitely generated nil algebra of Gelfand-Kirillov dimension $\leq d+6$.
\end{customthm}

In \S\ref{Section5}, we prove embedding theorems for countable dimensional algebraic primitive algebras. I. Kaplansky \cite{15} asked if there exists an infinite dimensional finitely generated algebraic primitive algebra, a particular case of the celebrated Kurosh Problem. Such examples were constructed by J. Bell and L. Small in \cite{7}. Then J. Bell, L. Small, and A. Smoktunowicz \cite{8} constructed finitely generated algebraic primitive algebras of finite Gelfand-Kirillov dimension provided that the ground field is countable.

Our embedding theorems for algebraic primitive algebras have a special feature.

Let $A$ be an associative algebra over a ground field $F$. Let $X$ be a countable set. Consider the algebra $M_\infty(A)$ of $X\times X$ matrices over $A$ having finitely many nonzero entries. Clearly, the algebra $A$ is embeddable in $M_\infty(A)$ in many ways. We say that an algebra $A$ is \underline{$M_\infty$-embeddable} in an algebra $B$ if there exists an embedding $\varphi:M_\infty(A)\rightarrow B$. We say that $A$ is $M_\infty$-embeddable in $B$ as a (left, right) ideal if the image of $\varphi$ is a (left, right) ideal of $B$.

\begin{customthm}{5.1}\label{Theorem4}
An arbitrary countable dimensional stable algebraic primitive algebra is $M_\infty$-embeddable as a left ideal in a 2-generated algebraic primitive algebra.
\end{customthm}

In particular, this theorem answers the first part of question 7 from \cite{8}.

\begin{customthm}{5.2}\label{Theorem5}
Let $F$ be a countable field. An arbitrary countable dimensional stable algebraic primitive algebra of Gelfand-Kirillov dimension $\leq d$ is $M_\infty$-embeddable as a left ideal in a finitely generated algebraic primitive algebra of Gelfand-Kirillov dimension $\leq d+6$.
\end{customthm}

In \S\ref{Section6}, we answer question 1 from \cite{8}.

\begin{customthm}{6.1}\label{Theorem6}
Let $F$ be a countable field. For an arbitrary $d\geq 8$, there exists a finitely generated nil $F$-algebra of Gelfand-Kirillov dimension $d$.
\end{customthm}

\section{Matrix wreath products of algebras}\label{Section2}

Let $F$ be a field and let $A,B$ be two associative $F$-algebras. Let $Lin(A,B)$ denote the vector space of all $F$-linear transformations \linebreak$A\rightarrow B$.

We will define multiplication on $Lin(B, B\otimes_FA)$. Let \linebreak$f,g\in Lin(B,B\otimes_FA)$. For an arbitrary element $b\in B$, let \linebreak$g(b)=\sum\limits_ib_i\otimes a_i$, where $a_i\in A$, $b_i\in B$. Let $f(b_i)=\sum\limits_j b_{ij}\otimes a_{ij}$, where $a_{ij}\in A$, $b_{ij}\in B$. Define
\[(fg)(b)=\sum\limits_{i,j}b_{ij}\otimes a_{ij}a_i.\]
In other words, if $\mu:A\otimes A\rightarrow A$ is the multiplication on $A$, then
\[fg=(1\otimes\mu)(f\otimes 1)g.\]
Choose an arbitrary basis $\{b_i\}_{i\in I}$ of the algebra $B$ and a linear transformation $f:B\rightarrow B\otimes_F A$. Let
\[f(b_j)=\sum\limits_ib_i\otimes a_{ij}.\]
Consider the $I\times I$ matrix
\[A_f=(a_{ij})_{I\times I}.\]
Each column of this matrix contains only finitely many nonzero entries $a_{ij}$.

Let $f,g\in Lin(B,B\otimes_FA)$. Let $f(b_j)=\sum\limits_ib_i\otimes a_{ij}$, $g(b_j)=\sum\limits_ib_i\otimes a_{ij}'$, so $A_f=(a_{ij})_{I\times I}$, $A_g=(a_{ij}')_{I\times I}$. Then
\begin{align*}
(fg)(b_j)&=(1\otimes\mu)(f\otimes1)(\sum\limits_ib_i\otimes a_{ij}')\\
&=(1\otimes\mu)\sum\limits_{k,i}b_k\otimes a_{ki}\otimes a_{ij}'\\
&=\sum\limits_kb_k\otimes\sum\limits_ia_{ki}a_{ij}',
\end{align*}
which implies that
\[A_{fg}=A_fA_g.\]

Let $\til{M}_{I\times I}(A)$ denote the algebra of $I \times I$ matrices over $A$ having finitely many nonzero entries in each column. We proved that every basis of the algebra $B$ gives rise to an isomorphism
\[Lin(B,B\otimes_FA)\cong\til{M}_{I\times I}(A).\]
Let's define a structure of a $B$-bimodule on $Lin(B,B\otimes_FA)$. For an arbitrary element $b\in B$ and a linear transformation $f:B\rightarrow B\otimes_FA$, we will define linear transformations $fb$ and $bf$ via:
\begin{align*}
(fb)(b')=f(bb'), \hspace{0.2cm} b'\in B, \text{ and}\\
(bf)(b')=(b\otimes1)f(b').
\end{align*}
In other words, if $f(b')=\sum\limits_ib_i\otimes a_i$, then $(bf)(b')=\sum\limits_ibb_i\otimes a_i$. We will check that this is indeed a $B$-bimodule.

Choose arbitrary elements $b_1, b_2\in B$. Then
\[((fb_1)b_2)(b)=(fb_1)(b_2b)=f(b_1b_2b)=(f(b_1b_2))(b),\]
hence $(fb_1)b_2=f(b_1b_2)$. Also,
\begin{align*}
(b_1(b_2f))(b)&=(b_1\otimes1)(b_2f(b))=(b_1\otimes1)(b_2\otimes1)f(b)=(b_1b_2\otimes1)f(b)\\
&=((b_1b_2)f)b,
\end{align*}
hence $b_1(b_2f)=(b_1b_2)f$. Finally,
\[((b_1f)b_2)(b)=(b_1f)(b_2b)=(b_1\otimes1)f(b_2b).\]
On the other hand,
\[(b_1(fb_2))(b)=(b_1\otimes1)(fb_2)(b)=(b_1\otimes1)f(b_2b).\]
Hence, $(b_1f)b_2=b_1(fb_2)$.

Now, consider the semidirect sum
\[A\wr B=B+Lin(B,B\otimes_FA)\]
that extends multiplications on $B$ and on $Lin(B,B\otimes_FA)$.

\begin{proposition*}\label{Proposition1}
$A\wr B$ is an associative algebra.
\end{proposition*}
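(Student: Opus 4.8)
The plan is to verify associativity of the multiplication on $A\wr B=B+Lin(B,B\otimes_FA)$ by checking it on each of the possible "types" of triple products, exploiting the bilinearity of multiplication to reduce to homogeneous elements. Writing a general element as $b+f$ with $b\in B$ and $f\in Lin(B,B\otimes_FA)$, the product is $(b+f)(b'+f')=bb'+(bf'+fb'+ff')$, where $bb'$ is the product in $B$, $fb'$ and $bf'$ are the bimodule actions defined above, and $ff'$ is the multiplication on $Lin(B,B\otimes_FA)$. Associativity then amounts to the identity $((b_1+f_1)(b_2+f_2))(b_3+f_3)=(b_1+f_1)((b_2+f_2)(b_3+f_3))$, which by distributivity splits into eight cases according to which of the three factors lies in $B$ versus $Lin(B,B\otimes_FA)$.

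The eight cases reduce as follows. The case $(b_1,b_2,b_3)$ is associativity of $B$, which is given. Three of the remaining cases, namely those involving exactly one factor from $Lin(B,B\otimes_FA)$ and two from $B$, together with the two cases involving exactly two factors from $Lin(B,B\otimes_FA)$ and one from $B$, are precisely the $B$-bimodule axioms and the compatibility of the bimodule structure with the ring multiplication on $Lin(B,B\otimes_FA)$ — most of these identities, e.g. $(fb_1)b_2=f(b_1b_2)$, $b_1(b_2f)=(b_1b_2)f$, and $(b_1f)b_2=b_1(fb_2)$, were already verified in the text preceding the statement. The few that were not, such as $(b_1f_2)f_3=b_1(f_2f_3)$ and $(f_1f_2)b_3=f_1(f_2b_3)$ and $(f_1b_2)f_3=f_1(b_2f_3)$, follow by short direct computations: one writes $f_i$ via its effect on a basis element $b\in B$, pushes everything through the formula $fg=(1\otimes\mu)(f\otimes1)g$ together with the definitions of $fb$ and $bf$, and uses associativity of $\mu$.

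The genuinely new case is the triple product $(f_1f_2)f_3=f_1(f_2f_3)$ entirely inside $Lin(B,B\otimes_FA)$. The cleanest way to dispatch it is to appeal to the matrix description established just above: fixing a basis $\{b_i\}_{i\in I}$ of $B$, the assignment $f\mapsto A_f$ is a linear bijection $Lin(B,B\otimes_FA)\to\til M_{I\times I}(A)$ with $A_{fg}=A_fA_g$, so associativity of composition in $Lin(B,B\otimes_FA)$ is inherited from associativity of matrix multiplication in $\til M_{I\times I}(A)$ (which itself reduces to associativity of $\mu$, the sums being finite because each column has finite support). Alternatively, one can argue directly from $fg=(1\otimes\mu)(f\otimes1)g$ by expanding both $(f_1f_2)f_3$ and $f_1(f_2f_3)$ and invoking the associativity of $\mu$, i.e. $\mu(1\otimes\mu)=\mu(\mu\otimes1)$.

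The only point requiring a little care — the "main obstacle," such as it is — is bookkeeping: making sure all the infinite sums that appear are actually well defined, which is exactly why one restricts to $\til M_{I\times I}(A)$ (finitely many nonzero entries per column) rather than arbitrary $I\times I$ matrices, and making sure the bimodule mixed-associativity identities are stated and checked in the right order so that the eight-case verification is genuinely exhaustive. Once the matrix isomorphism is invoked for the all-$Lin$ case and the bimodule axioms (already proved) are cited for the mixed cases, nothing else is needed, and $A\wr B$ is associative.
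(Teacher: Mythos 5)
Your proposal is correct and follows essentially the same route as the paper: associativity of $Lin(B,B\otimes_FA)$ via the isomorphism with $\til{M}_{I\times I}(A)$, the $B$-bimodule identities already verified in the text for the one-$Lin$-factor cases, and direct verification of the three mixed identities $(fb')g=f(b'g)$, $(fg)b'=f(gb')$, $(b'f)g=b'(fg)$, which are exactly the computations the paper carries out explicitly (your sketch of them is accurate, though note these use only the interchange of the maps involved, not associativity of $\mu$). Aside from a slight miscount in your prose (there are three, not two, cases with exactly two factors from $Lin(B,B\otimes_FA)$ — you do list all three correctly afterwards), nothing is missing.
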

\begin{proof}
From the isomorphism $Lin(B,B\otimes_FA)\cong\til{M}_{I\times I}(A)$, we conclude that the algebra $Lin(B,B\otimes_FA)$ is associative. We checked above that $Lin(B,B\otimes_FA)$ is a bimodule over the associative algebra $B$. Hence, it remains to check that for arbitrary elements $b'\in B$; \linebreak$f,g\in Lin(B,B\otimes_FA)$, we have $(fb')g=f(b'g)$, $f(gb')=(fg)b'$, $(b'f)g=b'(fg)$. Indeed, let $b\in B$ and let $g(b)=\sum\limits_ib_i\otimes a_i$. Then
\[(fb'\otimes1)g(b)=\sum(fb')(b_i)\otimes a_i=\sum f(b'b_i)\otimes a_i.\]
Therefore,
\[((fb')g)(b)=(1\otimes\mu)\sum\limits_if(b'b_i)\otimes a_i.\]
Now consider the element $(f(b'g))(b)$. We have
\[(b'g)(b)=(b'\otimes1)g(b)=\sum\limits_ib'b_i\otimes a_i.\]
Applying $f\otimes1$, we get
\[(f\otimes1)(\sum\limits_ib'b_i\otimes a_i)=\sum\limits_if(b'b_i)\otimes a_i.\]
Finally,
\[(f(b'g))(b)=(1\otimes\mu)(f\otimes1)(b'g)(b)=(1\otimes\mu)\sum\limits_if(b'b_i)\otimes a_i.\]
Hence, $(fb')g=f(b'g)$.

Next,
\[((fg)b')(b)=(fg)(b'b)=(1\otimes\mu)(f\otimes1)g(b'b).\]
On the other hand,
\[(f(gb'))(b)=(1\otimes\mu)(f\otimes1)(gb')(b).\]
Now, $g(b'b)=(gb')(b)$ shows that $(fg)b'=f(gb')$. We will show that $(b'f)g=b'(fg)$. We have
\[((b'f)g)(b)=(1\otimes\mu)(b'f\otimes1)g(b)=(1\otimes\mu)(b'\otimes1\otimes1)(f\otimes1)g(b),\]
whereas
\[(b'(fg))(b)=(b'\otimes1)(1\otimes\mu)(f\otimes1)g(b).\]
Now it remains to notice that $(1\otimes\mu)(b'\otimes1\otimes1)=(b'\otimes1)(1\otimes\mu)$, which completes the proof of the proposition.
\end{proof}

We call $A \wr B=B+Lin(B,B\otimes_FA)$ the \underline{matrix wreath product} of the algebras $A,B$.

We remark that the above construction was preceded and inspired by
\begin{enumerate}[(i)]
\item constructions of examples in the paper \cite{8} by J. Bell, L. Small, and A. Smoktunowicz, and
\item a different definition of wreath products by Leavitt path algebras in the paper \cite{1} by A. Alahmadi and H. Alsulami.
\end{enumerate}

If $_BM$ is a left module over the algebra $B$, then we can define
\[A\wr_MB=B+Lin(M,M\otimes_FA).\]

\section{Properties of matrix wreath products}\label{Section3}

Fix an element $b\in B$. For a linear transformation $\gamma:B\rightarrow A$, consider an element $c_\gamma\in Lin(B,B\otimes_FA)$, $c_\gamma(b')=b\otimes\gamma(b')$ for an arbitrary element $b'\in B$. Clearly, $\rho_b=\{c_\gamma|\gamma\in Lin(B,A)\}$ is a right ideal of the algebra $Lin(B,B\otimes_FA)$.

Consider the algebra
\[S(A,B)=\sum\limits_{b\in B}\rho_b\vartriangleleft_rLin(B,B\otimes_FA).\]
The algebra $S(A,B)$ consists of linear transformations $\varphi:B\rightarrow B\otimes_FA$ such that there exists a finite dimensional subspace $V\subset B$ with \linebreak$\varphi(B)\subseteq V\otimes_FA$. Once we choose a basis $\{b_i, i\in I\}$ of the algebra $B$ and thus define an isomorphism $Lin(B,B\otimes_FA)\cong\til{M}_{I\times I}(A)$, the algebra $S(A,B)$ consists of (infinite) $I\times I$ matrices having finitely many nonzero rows. Recall that the algebra $M_\infty(A)$ consists of $I\times I$ matrices having finitely many nonzero entries.

In this section, we will study ring theoretic properties of $S(A,B)$ and of subalgebras $B+S<A\wr B$, where $M_\infty(A)\subseteq S\subseteq S(A,B)$. First, we will determine conditions for $B+S$ to be prime. Recall that an algebra is said to be prime if the product of any two nonzero ideals is not equal to zero.

In what follows, we assume that the algebra $B$ does not contain nonzero element $b$ such that $\dim_FbB<\infty$.

Let $\hat{A}$ denote the unital hull of the algebra $A$, i.e., $\hat{A}=A$ if $A$ contains 1, otherwise $\hat{A}=A+F\cdot1$.

For an element $b\in B$, let $L_b$ denote the operator of left multiplication $L_b:B\rightarrow B$, $x\rightarrow bx$. The operator $L_b$ can be viewed as a mapping $L_b:B\rightarrow B\otimes1$, hence $L_b\in Lin(B,B\otimes_F\hat{A})$. Denote
\[L_B=\{L_b,b\in B\}<Lin(B,B\otimes_F\hat{A}).\]

\begin{lemma}\label{Lemma1}
\begin{enumerate}[(1)]
\item \label{Item1} $L_BS(A,B)+S(A,B)L_B\subseteq S(A,B)$,
\item \label{Item2} $L_B\cap S(A,B)=(0).$
\end{enumerate}
\end{lemma}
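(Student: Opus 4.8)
The plan is to reduce both statements to the intrinsic description of $S(A,B)$ recalled just above the lemma: a transformation $\varphi\in Lin(B,B\otimes_F A)$ lies in $S(A,B)$ if and only if $\varphi(B)\subseteq V\otimes_F A$ for some finite dimensional subspace $V\subseteq B$.

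First I would rewrite the products $L_b\varphi$ and $\varphi L_b$, formed inside $Lin(B,B\otimes_F\hat A)$, in terms of the $B$-bimodule action on $Lin(B,B\otimes_F A)$ defined in \S\ref{Section2}. Using $fg=(1\otimes\mu)(f\otimes 1)g$ together with $L_b(b')=bb'\otimes 1$, for $\varphi\in Lin(B,B\otimes_F A)$ and $b'\in B$ one obtains
\[(L_b\varphi)(b')=(1\otimes\mu)(L_b\otimes 1)\varphi(b')=(b\otimes 1)\varphi(b')=(b\varphi)(b'),\]
\[(\varphi L_b)(b')=(1\otimes\mu)(\varphi\otimes 1)(bb'\otimes 1)=\varphi(bb')=(\varphi b)(b').\]
In particular $L_b\varphi$ and $\varphi L_b$ again take values in $B\otimes_F A$, so statement (1) is equivalent to the inclusion $b\,S(A,B)+S(A,B)\,b\subseteq S(A,B)$ for every $b\in B$. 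This is immediate from the description above: if $\varphi(B)\subseteq V\otimes_F A$ with $\dim_F V<\infty$, then $(b\varphi)(B)=(b\otimes 1)\varphi(B)\subseteq(bV)\otimes_F A$ and $\dim_F bV\le\dim_F V<\infty$, while $(\varphi b)(B)=\varphi(bB)\subseteq\varphi(B)\subseteq V\otimes_F A$. (Equivalently, in matrix form: $A_{L_b}$ is a scalar matrix with finitely many nonzero entries in each column and $A_\varphi$ has finitely many nonzero rows, so $A_{L_b}A_\varphi$ and $A_\varphi A_{L_b}$ again have finitely many nonzero rows and entries in $A$.)

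For statement (2), suppose $L_b\in S(A,B)$ for some $b\in B$. Then $L_b(B)\subseteq V\otimes_F A$ for some finite dimensional $V\subseteq B$, while also $L_b(B)\subseteq B\otimes_F F\!\cdot\!1$ since $L_b(b')=bb'\otimes 1$ for all $b'$. The key computation is that, inside $B\otimes_F\hat A$,
\[(B\otimes_F F\!\cdot\!1)\cap(V\otimes_F A)=V\otimes_F(F\!\cdot\!1\cap A),\]
which equals $V\otimes_F F\!\cdot\!1$ when $A$ is unital and $(0)$ when $A$ is non-unital; one checks it by extending a basis of $V$ to a basis of $B$ and comparing coordinates, using $1\neq 0$ in $\hat A$. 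In either case $L_b(b')=bb'\otimes 1$ lies in $V\otimes_F F\!\cdot\!1$ for every $b'$, hence $bB\subseteq V$ and $\dim_F bB<\infty$. The standing hypothesis that $B$ contains no nonzero element $b$ with $\dim_F bB<\infty$ then forces $b=0$, i.e.\ $L_b=0$, so $L_B\cap S(A,B)=(0)$.

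I do not expect a real obstacle here: the argument is essentially bookkeeping. The one spot that needs attention is the last intersection computation in statement (2), where the non-unital case must be handled separately from the unital one — and where one must not forget that the conclusion genuinely uses the blanket assumption placed on $B$ immediately before the lemma.
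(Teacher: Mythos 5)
Your proof is correct and follows essentially the same route as the paper: part (1) via the finite-dimensional subspace $V$ with $(L_b\varphi)(B)\subseteq bV\otimes A$ and $(\varphi L_b)(B)\subseteq V\otimes A$, and part (2) by deducing $\dim_F bB<\infty$ and invoking the standing assumption on $B$. Your extra bookkeeping (identifying $L_b\varphi$, $\varphi L_b$ with the bimodule actions and the unital versus non-unital intersection in $B\otimes_F\hat A$) just makes explicit what the paper leaves implicit.
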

\begin{proof}
Let $\varphi\in S(A,B)$, let $V\subset B$ be a finite dimensional subspace such that $\varphi(B)\subseteq V\otimes A$. Let $b\in B$. Then $(L_b\varphi)(B)\subseteq bV\otimes A$, $(\varphi L_b)(B)\subseteq \varphi(B)\subseteq V\otimes A$, which proves (\ref{Item1}).

If $b\in B$ and $L_b\in S(A,B)$, then $\dim_FbB<\infty$. By our assumption, it implies that $b=0$. This completes the proof of the lemma.
\end{proof}

Let $M_\infty(A)\subseteq S\subseteq S(A,B)$ be a subalgebra such that $BS+SB\subseteq S$.

\begin{proposition}\label{Proposition2}
The algebra $B+S$ is prime if and only if the algebra $A$ is prime.
\end{proposition}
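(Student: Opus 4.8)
The plan is to prove both directions by relating ideals of $B+S$ to ideals of $A$ via the matrix picture. Fix a basis $\{b_i\}_{i\in I}$ of $B$, so that $S$ is realized inside $\tilde M_{I\times I}(A)$ as a set of matrices with finitely many nonzero rows, and $M_\infty(A)\subseteq S$. The key structural fact to exploit is that $M_\infty(A)$ is an ideal (even a two-sided ideal) of $B+S$: it is closed under multiplication by $S$ on both sides since $S\subseteq S(A,B)$ (matrices with finitely many nonzero rows times finitely supported matrices stay finitely supported), and $BM_\infty(A)+M_\infty(A)B\subseteq M_\infty(A)$ because left multiplication by $L_b$ permutes/recombines columns while preserving finiteness of the support, and right multiplication by $L_b$ recombines rows without enlarging the support. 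I would also record that $M_\infty(A)$ is itself an ideal of $S(A,B)$ for the same reason. Finally, I need the elementary observation that for the matrix units $E_{ij}(a)\in M_\infty(A)$ one has $E_{ij}(a)\,E_{kl}(a') = \delta_{jk}E_{il}(aa')$, so that $M_\infty(A)$ behaves like an infinite matrix ring over $A$, and an ideal of $M_\infty(A)$ has the form $M_\infty(J)$ for an ideal $J\vartriangleleft A$.

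For the ``only if'' direction, suppose $A$ is not prime, so there are nonzero ideals $J_1,J_2\vartriangleleft A$ with $J_1J_2=(0)$. Then $M_\infty(J_1)$ and $M_\infty(J_2)$ are nonzero ideals of $B+S$ (by the closure properties above, once one checks $B\cdot M_\infty(J_k)\subseteq M_\infty(J_k)$, which holds since $L_b$ acts by $F$-linear row operations that keep entries inside $J_k$), and their product is zero. Hence $B+S$ is not prime.

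For the ``if'' direction, assume $A$ is prime; let $P_1,P_2$ be nonzero ideals of $B+S$ and suppose toward a contradiction that $P_1P_2=(0)$. The strategy is to push down to $M_\infty(A)$. First I claim $P_k\cap M_\infty(A)\neq(0)$: take a nonzero element $p\in P_k$; write $p=b+\varphi$ with $b\in B$, $\varphi\in S$. Multiplying $p$ by suitable matrix units $E_{ij}(1)\in M_\infty(\hat A)$ (or by elements of $M_\infty(A)$, using that $B$ has no nonzero $b$ with $\dim_F bB<\infty$, so $bB\neq 0$ and one can select a column where $b$ acts nontrivially) kills the $B$-component and leaves a nonzero element of $M_\infty(A)\cap P_k$ — this uses $BM_\infty(A)\subseteq M_\infty(A)$ and $M_\infty(A)P_k\subseteq P_k$. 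Concretely: if $p=b+\varphi\neq 0$, pick $i$ with $b_i\notin$ (some finite set where things vanish) and hit $p$ on the right by $E_{i\,*}$-type elements so that the $B$ part, which acts by $x\mapsto L_b x$, combines with $\varphi$ to give a genuinely nonzero finitely-supported matrix; iterate with left multiplications by matrix units to isolate a single nonzero entry $a\in A$, landing in $M_\infty(A)\cap P_k$ with that entry generating a nonzero chunk. Then $P_k\cap M_\infty(A)$ is a nonzero ideal of $M_\infty(A)$, hence equals $M_\infty(J_k)$ for a nonzero ideal $J_k\vartriangleleft A$. From $P_1P_2=(0)$ we get $M_\infty(J_1)M_\infty(J_2)=(0)$, which forces $J_1J_2=(0)$ by computing a single matrix entry $E_{11}(j_1)E_{11}(j_2)=E_{11}(j_1j_2)$. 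Since $A$ is prime and $J_1,J_2\neq(0)$, this is a contradiction. Therefore $B+S$ is prime.

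The main obstacle is the claim that a nonzero ideal $P_k$ of $B+S$ meets $M_\infty(A)$ nontrivially: one must carefully use the standing hypothesis that $B$ contains no nonzero $b$ with $\dim_F bB<\infty$ to guarantee that the $B$-component of any element of $P_k$ cannot be ``protected'' from being exposed by multiplication by finitely-supported matrices, and one must verify that after multiplying by elements of $M_\infty(A)$ the result does not collapse to zero. All the other steps — closure of $M_\infty(J)$ under the relevant multiplications, and the identification of ideals of $M_\infty(A)$ with $M_\infty(J)$ — are routine matrix-unit computations.
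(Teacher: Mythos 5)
Your structural claim that $M_\infty(A)$ is a \emph{two-sided} ideal of $B+S$ is false, and your first direction leans on it. $M_\infty(A)$ is only a left ideal: right multiplication by an element of $S$, or by $b\in B$ (which acts on $Lin(B,B\otimes_FA)$ by $f\mapsto fL_b$), can smear a single finitely supported row into an infinite one. For instance, if $c_\gamma\in\rho_1\subseteq S(A,B)$ has $\gamma(b_j)\neq 0$ for infinitely many basis elements $b_j$, then $E_{11}(a)c_\gamma$ has first row $(a\gamma(b_j))_j$, which in general has infinitely many nonzero entries; likewise $E_{ij}(a)b=E_{ij}(a)L_b$ need not lie in $M_\infty(A)$ because the rows of $L_b$ (unlike its columns) can be infinite. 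Your justification (``finitely many nonzero rows times finitely supported stays finitely supported'') only covers multiplication on the \emph{left}. Consequently, in the ``only if'' direction, $M_\infty(J_1),M_\infty(J_2)$ are merely nonzero left ideals of $B+S$ with zero product; to conclude non-primeness you must either invoke the standard fact that in a prime algebra a product of two nonzero left ideals is nonzero (this is what the paper does), or pass to the two-sided ideals they generate and check (using that $M_\infty(J_2)$ is a left ideal) that the product is still zero. As written, the step ``hence $B+S$ is not prime'' is unjustified.

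In the ``if'' direction, the claim that every nonzero ideal $P$ of $B+S$ meets $M_\infty(A)$ nontrivially is exactly where all the content sits, and your sketch does not close it. Since $M_\infty(A)$ is a left ideal, $pM_\infty(A)\subseteq P\cap M_\infty(A)$ for $p\in P$, so your claim is equivalent to: no nonzero $b+s$ ($b\in B$, $s\in S$) satisfies $(b+s)M_\infty(A)=(0)$. Your recipe of hitting $p$ with matrix units never addresses the real danger, namely that the two components cancel identically on $M_\infty(A)$, i.e.\ $L_bm=-sm$ for all $m\in M_\infty(A)$. Excluding this is precisely the paper's argument: viewing $b$ through its action as the column-finite matrix $L_b$ over $\hat{A}$, primeness of $A$ gives that the left annihilator of $M_\infty(A)$ in $\widetilde{M}_{I\times I}(\hat{A})$ is zero, hence $L_b+s=0$; then Lemma \ref{Lemma1}(2) ($L_B\cap S(A,B)=(0)$) together with the standing hypothesis on $B$ (no nonzero $b$ with $\dim_FbB<\infty$) forces $b=0$ and $s=0$. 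You name this as ``the main obstacle'' but supply no argument for it; note also that primeness of $A$ is genuinely needed there (if $A$ had a nonzero element $a$ with $aA=(0)$, then $E_{11}(a)$ would left-annihilate $M_\infty(A)$), so the annihilator statement cannot be dismissed as a routine matrix-unit manipulation. Finally, your appeal to ``every ideal of $M_\infty(A)$ is of the form $M_\infty(J)$'' is both more than you need and delicate when $A$ is not unital; it suffices, as in the paper, that $M_\infty(A)$ is prime and that $P_1\cap M_\infty(A)$, $P_2\cap M_\infty(A)$ are ideals of $M_\infty(A)$ with zero product.
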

\begin{proof}
Suppose that the algebra $B+S$ is prime. If $J_1,J_2$ are nonzero ideals of $A$, then $M_\infty(J_1), M_\infty(J_2)$ are nonzero left ideals of the algebra $B+S$. If $J_1J_2=(0)$, then $M_\infty(J_1)M_\infty(J_2)=(0)$. It is well known that a product of two nonzero left ideals in a prime algebra is not equal to zero. That contradicts the primeness of $B+S$.

Suppose now that the algebra $A$ is prime. Then the algebra $M_\infty(A)$ is prime as well. If $K_1, K_2$ are nonzero ideal of $B+S$ such that \linebreak$K_1K_2=(0)$, then $K_1\cap M_\infty(A)=(0)$ or $K_2\cap M_\infty(A)=(0)$. Since $M_\infty(A)$ is a left ideal of $B+S$, it follows that $K_1M_\infty(A)=(0)$ or $K_2M_\infty(A)=(0)$. Hence, $M_\infty(A)$ has a nonzero left annihilator in $B+S$. Let $0\neq b\in B$, $s\in S$, and suppose that $(b+s)M_\infty(A)=(0)$. Since the algebra $M_\infty(A)$ has zero left annihilator in $\til{M}_{I\times I}(A)$, it follows that $\til{M}_{I\times I}(A)(b+s)=(0)$.

For an arbitrary element $f\in Lin(B,B\otimes_FA)$, we have \linebreak$(fb)(b')=f(bb')=(fL_b)(b')$. Hence, $L_b+s=0$. By Lemma \ref{Lemma1} (\ref{Item1}), $b=0$ and it remains to recall again that $M_\infty(A)$ has zero left (right) annihilators in $\til{M}_{I\times I}(A)$. This completes the proof of the proposition.
\end{proof}

Next we will find conditions for $B+S$ to be primitive. Recall that an algebra is said to be (left) primitive if it has a faithful irreducible left module \cite{14}.

\begin{lemma}\label{Lemma2}
Let $R$ be a prime algebra with a nonzero left ideal $L\vartriangleleft_eR$. Suppose that
\begin{enumerate}[(1)]
\item \label{Assumption1} $\{\ell\in L|L\ell=(0)\}=(0)$,
\item \label{Assumption2} for arbitrary $n\geq 1$; arbitrary elements $a\in R$; and $\ell_1, \cdots, \ell_n\in L$, there exists an element $\ell'\in L$ such that $(a-\ell')\ell_i=0$, $1\leq i\leq n$.
\end{enumerate}
Then the algebra $R$ is primitive if and only if the algebra $L$ is primitive.
\end{lemma}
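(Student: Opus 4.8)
The plan is to exploit the hypotheses to transport faithful irreducible modules back and forth between $R$ and $L$. Recall the standard fact about idempotent-free Morita-type relationships: if $L \vartriangleleft_e R$ is a left ideal with zero left annihilator in the sense of (\ref{Assumption1}), then for any left $R$-module $M$ one can form $LM$, and for any left $L$-module $N$ one can try to extend the action of $L$ to $R$. The key point is that (\ref{Assumption2}) is exactly what allows a well-defined extension: given $r \in R$ and $x = \sum \ell_i y_i \in LN$ (a finite sum, $\ell_i \in L$, $y_i \in N$), one picks $\ell' \in L$ with $(r - \ell')\ell_i = 0$ for all $i$ and sets $r \cdot x := \ell' \cdot x$; condition (\ref{Assumption2}) guarantees such $\ell'$ exists, and one must check this is independent of the chosen $\ell'$ and of the representation of $x$, using primeness of $R$ and (\ref{Assumption1}).

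First I would handle the easy direction: suppose $L$ is primitive, with faithful irreducible left $L$-module $N$. I would show $N$ extends to a faithful irreducible $R$-module via the recipe above. Irreducibility as an $R$-module follows because $LN = N$ (as $L$ acts irreducibly and nontrivially, $LN$ is a nonzero submodule hence all of $N$), so $R$-submodules coincide with $L$-submodules. For faithfulness: if $r \in R$ annihilates $N$, then in particular $Lr$ annihilates $N$ — wait, more carefully, $rN = 0$ forces $r L N \subseteq$ (its $L$-action)... I would argue that $\mathrm{Ann}_R(N)$ is a two-sided ideal of $R$ meeting $L$ in $\mathrm{Ann}_L(N) = 0$, and since $L$ is an essential left ideal of the prime ring $R$, a two-sided ideal meeting $L$ trivially must be zero. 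Conversely, suppose $R$ is primitive with faithful irreducible $M$. Then $LM$ is a nonzero (by faithfulness, since $\mathrm{Ann}_R(M)=0$ and $L \neq 0$ imply $LM \neq 0$) $R$-submodule of $M$, hence $LM = M$. Now $M$ is irreducible as an $L$-module: given $0 \neq x \in M$, $Rx = M$, so $LRx = LM = M$; one then needs that $Lx = M$, which I would get by showing any nonzero $L$-submodule of $M$ is also an $R$-submodule (using (\ref{Assumption2}) again, running the extension argument in reverse) and invoking $R$-irreducibility. Finally $M$ is faithful over $L$: $\mathrm{Ann}_L(M) \subseteq \mathrm{Ann}_R(M) = 0$.

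The main obstacle is the well-definedness and $R$-module-axiom verification for the extension of the $L$-action to an $R$-action on $LN$ in the first direction. Concretely, one must show that if $\sum \ell_i y_i = 0$ in $N$ then, for the chosen $\ell'$ depending on $r$ and the $\ell_i$, also $\ell' \cdot \sum \ell_i y_i$ is unambiguous — the cleanest formulation is to prove that the map $R \to \mathrm{End}_F(LN)$ extending $L \to \mathrm{End}_F(LN)$ is the unique one making $LN$ an $R$-module compatible with the $L$-structure, and that it exists precisely because of (\ref{Assumption2}); associativity $(r_1 r_2)\cdot x = r_1 \cdot (r_2 \cdot x)$ then follows by choosing $\ell'$ for $r_2$ against the $\ell_i$, then $\ell''$ for $r_1$ against $\ell'\ell_i$ (or against a spanning set), and comparing with $\ell'''$ chosen for $r_1 r_2$, all reductions handled by (\ref{Assumption1}) and primeness. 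I expect the symmetric bookkeeping in the converse direction (recognizing $L$-submodules of $M$ as $R$-submodules) to be the second, lesser, obstacle, and everything else — irreducibility, the essentiality argument for faithfulness — to be routine.
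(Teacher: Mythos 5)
Your proposal is correct, and one half of it diverges from the paper in an interesting way. For the direction where $L$ is primitive, your plan is essentially the paper's proof: since $N=LN$, one sets $a\cdot\sum\ell_im_i=\sum(a\ell_i)m_i$, and condition (\ref{Assumption2}) makes this well defined (choose a single $\ell'$ serving all the $\ell_i$ occurring in two representations, so independence of both the representation and of $\ell'$ is automatic; primeness and (\ref{Assumption1}) are not actually needed for that step), while your faithfulness check via the two-sided ideal $\mathrm{Ann}_R(N)$ meeting $L$ in $\mathrm{Ann}_L(N)=(0)$ correctly supplies a detail the paper leaves implicit. For the converse direction the paper argues differently: it keeps the faithful irreducible $R$-module $M$ but passes to the quotient $M/M'$, where $M'=\{m\in M\mid Lm=(0)\}$, and uses (\ref{Assumption1}) to get faithfulness of $M/M'$ over $L$; condition (\ref{Assumption2}) is not used there at all. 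You instead claim that $M$ itself restricts to a faithful irreducible $L$-module, reducing this to the claim that every nonzero $L$-submodule of $M$ is an $R$-submodule. That route does work, and the computation carrying it is the same one as in the other direction: $LM$ is a nonzero $R$-submodule, so $M=LM$; writing $x=\sum\ell_im_i$ and choosing $\ell'\in L$ with $(a-\ell')\ell_i=0$ gives $ax=\ell'x\in Lx$, i.e. $Rx\subseteq Lx$ for every $x\in M$. This simultaneously makes every $L$-submodule $R$-stable and yields $Lx\supseteq Rx=M$ for $x\neq0$; the latter point should be made explicit, since your sketch deduces $Lx=M$ from the submodule claim without excluding $Lx=(0)$ --- exactly the degeneracy the paper's quotient by $M'$ is designed to absorb, and which in your setup is ruled out by the inclusion $Rx\subseteq Lx$ (or by applying your submodule claim to $M'$ itself). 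The trade-off: the paper's argument for this direction uses only hypothesis (\ref{Assumption1}) and tolerates $M'\neq(0)$, whereas yours uses only hypothesis (\ref{Assumption2}) and proves the slightly stronger fact that $M'=(0)$ automatically, so $M$ itself rather than a quotient is the faithful irreducible $L$-module; under the combined hypotheses of the lemma both routes are available.
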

\begin{proof}
Let $M$ be a faithful irreducible left module over $R$. Consider the subspace $M'=\{m\in M|Lm=(0)\}$. Because of faithfulness of $M$, we have $M'\lvertneqq M$. We will show that the factor space $M/M'$ is a faithful irreducible $L$-module.

Indeed, if $\ell\in L$ and $\ell(M/M')=(0)$, then $L\ell M=(0)$, which implies that $L\ell=(0)$. From (\ref{Assumption1}), we conclude that $\ell=0$. We will show that the $L$-module $M/M'$ is irreducible. Let $0\neq m+M'\in M/M'$. Then $Lm=M$, which implies $L(m+M')=M/M'$.

Now suppose that the algebra $L$ is primitive and M is a faithful irreducible left module over $L$. We will define a structure of an $R$-module on $M$. Since $LM=M$, an arbitrary element of $M$ can be represented as $\sum\limits_{i=1}^n\ell_im_i$, $\ell_i\in L$, $m_i\in M$. For an element $a \in R$, we define $a(\sum\limits_{i=1}^n\ell_im_i)=\sum\limits_{i=1}^n(a\ell_i)m_i$. To check that this action is well defined, we have to show that $\sum\limits_{i=1}^n\ell_im_i=0$ implies $\sum\limits_{i=1}^n(a\ell_i)m_i=0$. By assumption (\ref{Assumption2}), these exists an element $\ell'\in L$ such that $a\ell_i=\ell'\ell_i$, $1\leq i\leq n$. Hence, $\sum\limits_{i=1}^n(a\ell_i)m_i=\sum\limits_{i=1}^n\ell'\ell_im_i=0$. This completes the proof of the lemma.
\end{proof}

\begin{proposition}\label{Proposition3}
The algebra $B+S$ is primitive if and only if the algebra $A$ is primitive.
\end{proposition}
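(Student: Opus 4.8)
The plan is to reduce the statement to Lemma \ref{Lemma2} applied to $R = B+S$ and $L = M_\infty(A)$, exactly as Proposition \ref{Proposition2} reduced primeness to primeness of $A$. First I would record that $M_\infty(A)$ is a (two-sided, in fact) ideal of $M_\infty(A)$ with $M_\infty(A)\cdot M_\infty(A) \ne (0)$ whenever $A \ne (0)$, and that primitivity of $A$ is equivalent to primitivity of $M_\infty(A)$ — this is the standard Morita-type fact that a faithful irreducible $A$-module $V$ gives the faithful irreducible $M_\infty(A)$-module $V^{(I)}$ (column vectors with finitely many nonzero entries), and conversely a faithful irreducible $M_\infty(A)$-module restricts appropriately along the corner $e_{11}M_\infty(A)e_{11}\cong A$. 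So it suffices to show $B+S$ is primitive iff $M_\infty(A)$ is primitive, and for that I would verify the three hypotheses of Lemma \ref{Lemma2} with $R = B+S$, $L = M_\infty(A)$.

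The three things to check are: (a) $B+S$ is prime; (b) $M_\infty(A)$ is an essential left ideal of $B+S$ with zero right-annihilator inside itself, i.e. $\{\ell \in M_\infty(A) \mid M_\infty(A)\ell = (0)\} = (0)$; and (c) the approximation hypothesis (\ref{Assumption2}): for $a \in B+S$ and $\ell_1,\dots,\ell_n \in M_\infty(A)$ there is $\ell' \in M_\infty(A)$ with $(a-\ell')\ell_i = 0$ for all $i$. For (a), if $A$ is primitive it is prime, so Proposition \ref{Proposition2} gives that $B+S$ is prime. For the essentiality of $M_\infty(A)$ as a left ideal: any nonzero left ideal of $B+S$ meets $M_\infty(A)$ nontrivially — this is essentially the argument already used in the proof of Proposition \ref{Proposition2} (a nonzero left ideal with zero intersection would annihilate $M_\infty(A)$, forcing, via $\til M_{I\times I}(A)$ having zero left/right annihilators over $M_\infty(A)$ and Lemma \ref{Lemma1}(\ref{Item1}), that the ideal is zero). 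The zero-right-annihilator condition (b) is immediate since $M_\infty(A)$ contains all finite matrices: if $M_\infty(A)\ell = (0)$ then in particular $e_{ii}Ae_{ij}\cdot \ell = 0$ for all $i,j$, forcing every entry of $\ell$ to be a left-annihilated element of $A$, hence (as $A$ is prime, so has no nonzero left annihilator) $\ell = 0$.

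The main obstacle — and the place where the matrix-wreath structure actually does the work — is hypothesis (c). Given $a = b + s$ with $b \in B$, $s \in S$, and given finitely many finite matrices $\ell_1,\dots,\ell_n \in M_\infty(A)$, I need $\ell' \in M_\infty(A)$ with $(a-\ell')\ell_i = 0$. The key point is that, after choosing a basis $\{b_i\}_{i\in I}$ of $B$, the element $b$ acts as left multiplication $L_b$, whose matrix is the (column-finite) matrix of $L_b$ with entries in $F\cdot 1$; the $\ell_i$ together involve only finitely many rows and columns, say indexed by a finite set $J \subset I$. Since $s$ has only finitely many nonzero rows and each column of $L_b$ is finite, the product $a\ell_i$ again involves only rows in some finite set $J' \supseteq J$ and columns in $J$; so I can take $\ell'$ to be the finite truncation of the matrix of $a$ to rows in $J'$ and columns in $J$, which lies in $M_\infty(A)$ (its entries lie in $A$, using that $s \in S \subseteq S(A,B)$ has entries in $A$ and that the relevant entries of $L_b$ can be absorbed — here one uses $0 \ne b$ would contribute entries in $F\cdot 1$, but since we only need agreement against the $\ell_i$ which kill columns outside $J$, and on $J$ the relevant block of $L_b$ has finite support, the truncation has entries in $\hat A$; one then notes $(a - \ell')\ell_i = 0$ holds entrywise because $a$ and $\ell'$ agree on exactly the block that $\ell_i$ sees). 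I would spell this truncation argument out carefully, since getting the index bookkeeping right — that only finitely many rows/columns are relevant and the truncation genuinely lands in $M_\infty(A)$ — is the one non-formal step; everything else is a citation of Lemma \ref{Lemma1}, Lemma \ref{Lemma2}, Proposition \ref{Proposition2}, and the standard Morita equivalence for $M_\infty$.
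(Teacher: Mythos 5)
There is a genuine gap, and it sits exactly where you locate the ``one non-formal step'': the verification of hypothesis (\ref{Assumption2}) of Lemma \ref{Lemma2} for $R=B+S$, $L=M_\infty(A)$. If $a=b+s$ with $0\neq b\in B$, then in the chosen basis $a$ acts through $L_b+s$, and the columns of $L_b$ have entries in $F\cdot 1\subseteq\hat{A}$, not in $A$. Your truncation therefore lands in $M_\infty(\hat{A})$, and the claim that the scalar entries ``can be absorbed'' is not justified; in fact hypothesis (\ref{Assumption2}) can genuinely fail for $R=B+S$ when $A$ is not unital. For a concrete obstruction, let $A$ be the ideal of elements without constant term in the free algebra $F\langle x,y\rangle$ (a nonzero ideal of a primitive algebra, hence primitive, and non-unital), and take $\ell_1=e_{11}(x)$. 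If $b$ contributes a nonzero scalar $\beta$ in some entry of the relevant column, then the corresponding entry of $(b+s)\ell_1$ is $\beta x+s'x$ with $s'\in A$, whereas every entry of $\ell'\ell_1$ with $\ell'\in M_\infty(A)$ lies in $Ax$, and $\beta x\notin Ax$. So no $\ell'\in M_\infty(A)$ satisfies $(a-\ell')\ell_1=0$, and Lemma \ref{Lemma2} cannot be applied to $R=B+S$ in the direction ``$M_\infty(A)$ primitive $\Rightarrow$ $B+S$ primitive.'' (Your other direction is fine, since only hypothesis (\ref{Assumption1}) is used there; and your whole scheme does work if one assumes $A\ni 1$, which the proposition does not.)

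The paper's proof avoids this by applying Lemma \ref{Lemma2} to $R=S$ rather than $R=B+S$: for $a\in S$ the matrix has entries in $A$ and finitely many nonzero rows, so truncating to the finitely many columns indexed by the nonzero rows of $\ell_1,\dots,\ell_n$ produces an honest element of $M_\infty(A)$, and (\ref{Assumption2}) holds. The passage between $S$ and $B+S$ is then handled by two standard facts: a nonzero two-sided ideal of a primitive algebra is primitive (giving ``$B+S$ primitive $\Rightarrow$ $S$ primitive''), and a prime algebra with a nonzero primitive ideal is primitive (giving ``$S$ primitive $\Rightarrow$ $B+S$ primitive,'' primeness of $B+S$ coming from Proposition \ref{Proposition2}). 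If you want to keep your structure, the repair is exactly this: run your truncation argument inside $S$, where it is correct, and add the two ideal-transfer facts; the Morita-type equivalence between primitivity of $A$ and of $M_\infty(A)$ that you invoke is fine and is also implicit in the paper.
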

\begin{proof}
Suppose that the algebra $B+S$ is primitive. Since a nonzero two sided ideal of a primitive algebra is primitive, we conclude that the algebra $S$ is primitive and therefore prime. The algebra $A$ is also prime by Proposition \ref{Proposition2}.

We will check whether the algebra $S$ and its left ideal $M_\infty(A)$ satisfy assumptions (\ref{Assumption1}), (\ref{Assumption2}) of Lemma \ref{Lemma2}. Part (\ref{Assumption1}) is trivial. Now we will check assumption (\ref{Assumption2}). Choose elements $a \in S$; $a_1,\cdots, a_n\in M_\infty(A)$. Let $b_1, \cdots, b_m$ be elements of the basis of the algebra $B$ such that $a_1, \cdots, a_n$ have only nonzero rows that correspond to $b_1, \cdots, b_m$. In other words, $a_1, \cdots, a_m \in \sum\limits_{i=1}^m\rho_{b_i}$.

Let $a'$ be the $I\times I$ matrix that has the same entries as $a$ in the columns that correspond to $b_1, \cdots, b_m$ and zeros everywhere else. Then $a'\in M_\infty(A)$ and $aa_i=a'a_i$, $1\leq i \leq n$. By Lemma \ref{Lemma2}, the left ideal $M_\infty(A)$ of the algebra $S$ is a primitive algebra, which implies primitivity  of the algebra $A$.

Now suppose that the algebra $A$ is primitive. By Proposition \ref{Proposition2}, the algebra $B+S$ and $S$ are prime. By Lemma \ref{Lemma2}, the algebra $S$ is primitive. It is easy to see that if a nonzero ideal of a prime algebra is primitive, then the full algebra is primitive as well. This finishes the proof of the proposition.
\end{proof}

In the rest of this section, we will study growth of some subalgebras in $A \wr B$. We will recall some definitions.

Let $R$ be an $F$-algebra generated by a finite dimensional subspace $V$. Let
\[V^n=\text{span}_F(v_1 \cdots v_k|k\leq n, v_i\in V, 1\leq i\leq k).\]
Then $\dim_FV^n< \infty$ and $R$ is the union of the ascending chain\linebreak $V^1\subseteq V^2\subseteq \cdots$. The function $g(V,n)=\dim_FV^n$ is called the growth function of the algebra $R$ that corresponds to the generating subspace $V$.

Given two functions $f_1, f_2:N\rightarrow [1,\infty)$, we say that $f_1$ is asymptotically less than or equal to $f_2$ (denote: $f_1 \preceq f_2$) if there exists $c\in N$ such that $f_1(n)\leq cf_2(cn)$ for all $n$. If $f_1\preceq f_2$ and $f_2 \preceq f_1$, then we say that $f_1$ and $f_2$ are asymptotically equivalent (denote: $f_1 \sim f_2$).

If $V_1, V_2$ are two finite dimensional generating subspaces of $R$, then $g(V_1, n)\sim g(V_2,n)$. We will denote the class of functions that are equivalent to $g(V,n)$ as $g_R(n)$.

If there exists $\alpha>0$ such that $g_R(n)\preceq n^\alpha$, then we say that growth of $R$ is polynomially bounded. In this case
\[GK\dim(R)=\inf\{\alpha>0|g_R(n)\preceq n^\alpha\}\]
is called the Gelfand-Kirillov dimension of $R$. If $R$ does not have polynomially bounded growth, then $GK\dim(R)=\infty$.

For a not necessarily finitely generated algebra $R$, we let
\[GK\dim(R)=\sup GK\dim(R'),\]
where $R'$ runs over all finitely generated subalgebras of $R$.

Coming back to the algebras $A,B$, we say that a linear transformation $\gamma: B\rightarrow A$ is a generating linear transformation if $\gamma(B)$ generates $A$.

Now suppose that the algebra $B$ contains 1. Let $\gamma:B\rightarrow A$ be a generating linear transformation. As above, we consider the element $c_\gamma:b\rightarrow1\otimes\gamma(b)\in B\otimes_FA$.

If $a\in A$, then we denote $ac_\gamma=c_{\gamma'}$, where $\gamma'(b)=a\gamma(b)$.

Consider the subalgebra $C=\langle B,c_\gamma\rangle$ generated in $A\wr B$ by $B$ and the element $c_\gamma$.

If $V$ is a generating subspace of the algebra $B$, then $U=V+Fc_\gamma$ is a generating subspace of the algebra $C$.

For $n\geq 1$, consider the vector space
\[W_n=\sum\limits_{i_1+\cdots+i_r\leq n}\gamma(V^{i_1})\cdots\gamma(V^{i_r})\subseteq A.\]
Clearly, $W_1\subseteq W_2\subseteq\cdots$, $A=\bigcup_{n\geq1}W_n$.

\begin{lemma}\label{Lemma3}
$U^n\subseteq \sum\limits_{i+j+k\leq n} V^i(W_jc_\gamma)V^k+V^n$ for any $n\geq1$.
\end{lemma}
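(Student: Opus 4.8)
The plan is to prove the inclusion by induction on $n$, using the multiplication rules in $A \wr B$ together with the bimodule structure of $Lin(B, B\otimes_F A)$ over $B$. The base case $n=1$ is immediate since $U = V + Fc_\gamma$ and $V^1 = V$, so $U^1 = V + Fc_\gamma \subseteq V^1 + W_1 c_\gamma + V^1$ (using $1 \in W_1$, which holds because $B$ is unital and $\gamma(1) \in W_1$ — more precisely $c_\gamma = 1 \cdot c_\gamma$ with the scalar $1 \in W_1$, interpreting $W_0 = F\cdot 1$ or absorbing into $W_1$).

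For the inductive step, I would write $U^{n} = U \cdot U^{n-1}$ and substitute the inductive description $U^{n-1} \subseteq \sum_{i+j+k \le n-1} V^i (W_j c_\gamma) V^k + V^{n-1}$. Multiplying on the left by a generator, there are two cases: left multiplication by an element of $V \subseteq B$, and left multiplication by $c_\gamma$. For left multiplication by $v \in V$: we have $v \cdot V^i(W_j c_\gamma)V^k \subseteq V^{i+1}(W_j c_\gamma)V^k$ and $v \cdot V^{n-1} \subseteq V^n$, so these terms stay in the claimed form with the index sum increased by one, hence $\le n$. The genuinely interesting case is left multiplication by $c_\gamma$. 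Here I would use the key computation that for $b \in B$ and $w \in W_j$, the product $c_\gamma \cdot (w c_\gamma)$ — interpreting $wc_\gamma = c_{\gamma'}$ with $\gamma'(b) = w\gamma(b)$ — produces, via the formula $fg = (1\otimes\mu)(f\otimes 1)g$, an element of the form $c_{\gamma''}$ where $\gamma''$ picks up an extra factor of $\gamma(V^{i_1})\cdots$ from expanding $v \in V^i$ through the $c_\gamma$; one needs that $c_\gamma \cdot (W_j c_\gamma) \subseteq W_{j+1} c_\gamma$ roughly speaking, but more carefully, $c_\gamma$ acting on the left of $V^i (W_j c_\gamma) V^k$ must first absorb the $V^i$ by the identity $c_\gamma \cdot b = c_{\gamma}\circ L_b$, which changes $\gamma$ into $b' \mapsto \gamma(bb')$; tracking this shows the $V^i$ on the left gets consumed and contributes to enlarging the middle $W$-factor to $W_{j + i + \text{(something)}}$, while the trailing $V^k$ is untouched. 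Also $c_\gamma \cdot V^{n-1} = c_\gamma \cdot b$ for $b \in V^{n-1}$, which gives an element of $Lin(B, B\otimes A)$ of the form $c_{\gamma'}$ with $\gamma'(B) \subseteq W_{n-1}\cdot(\text{stuff})$, landing in $W_{?} c_\gamma$ with index bounded appropriately, i.e. in $W_n c_\gamma$.

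The main obstacle I anticipate is bookkeeping the index arithmetic through the left action of $c_\gamma$: one must verify that absorbing a block $V^i$ into $c_\gamma$ on the left costs exactly $i$ in the $W$-index (via $\gamma(V^{i}) \subseteq \sum \gamma(V^{i_1})\cdots \subseteq W_i$ by definition of $W_n$), and that the subsequent multiplication of two "column-type" elements $c_{\gamma'} c_{\gamma''}$ adds the $W$-indices, so that the triple $(i,j,k)$ with $i+j+k \le n-1$ gets replaced by $(0, \, i+j+1, \, k)$ or similar, still summing to $\le n$ after accounting for the new generator. I would organize this by first recording as auxiliary observations: (a) $c_\gamma \cdot b = c_{\gamma \circ L_b}$ and $\gamma\circ L_b$ maps $V^p$ into $\gamma(bV^p) \subseteq \gamma(V^{p+\ell})$ when $b \in V^\ell$; (b) $a c_\gamma = c_{\gamma'}$ for $a \in A$ with $\gamma'(B) = a\gamma(B)$, so $W_j c_\gamma \subseteq \{c_{\gamma'} : \gamma'(B) \subseteq W_j \gamma(B)\}$; (c) composition of two such column elements multiplies and accumulates the $W$-data. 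Granting (a)–(c), the inductive step reduces to a finite case check, and the $V^n$ summand on the right absorbs all the "degenerate" terms where no $c_\gamma$ survives.
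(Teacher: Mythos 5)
Your plan is essentially the paper's proof: induct on $n$, observe that left multiplication by $V$ is harmless, and in the $c_\gamma$-case absorb $V^i$ via $c_\gamma v=c_{\gamma\circ L_v}$ and then collapse the product of two column-type elements, so the triple $(i,j,k)$ with $i+j+k\le n-1$ is replaced by one still summing to at most $n$. Two points to tighten when you write it out: the reason the absorption costs exactly $i$ is the identity $c_\alpha c_\beta=\alpha(1)c_\beta$, so that only $\gamma'(1)=\gamma(v)\in W_i$ survives (not all of $\gamma(vB)$, which is unbounded in the $W$-filtration) and $c_\gamma v\,(W_jc_\gamma)=(\gamma(v)W_j)c_\gamma\subseteq W_{i+j}c_\gamma$; and your parenthetical claim that $c_\gamma\cdot V^{n-1}$ lands in $W_nc_\gamma$ is false in general (since $\gamma(vb')$ need not have the form $w\gamma(b')$), but it is also unnecessary, because $c_\gamma V^{n-1}$ is itself the summand of the right-hand side with $(i,j,k)=(0,0,n-1)$.
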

\begin{proof}
For $n=1$, the assertion is obvious. We denote the right hand side of the inclusion above as $\rhs(n)$. We need to show that $U\rhs(n-1)\subseteq \rhs(n)$. Clearly, $V\rhs(n-1)\subseteq \rhs(n)$. Let \linebreak$v\in V^i$. Then $c_\gamma v=c_{\gamma'}$, where $\gamma'(b)=\gamma(vb)$. We have \linebreak$\gamma'(1)=\gamma(v)\in W_i$. Now,
\[c_\gamma v(W_jc_\gamma)=c_\gamma'(W_jc_\gamma)=(\gamma'(1)W_j)c_\gamma\subseteq W_{i+j}c_\gamma.\]
Therefore,
\[c_\gamma V^i(W_jc_\gamma)V^k\subseteq(W_{i+j}c_\gamma)V^k\subseteq \rhs(n-1)\subseteq \rhs(n).\]
This completes the proof of the lemma.
\end{proof}

Denote $w_\gamma(n)=\dim_FW_n$.

\begin{corollary}\label{Corollary1}
$g_C(n)\preceq g_B^2(n)w_\gamma(n)$.
\end{corollary}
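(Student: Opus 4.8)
The plan is to bound $\dim_F U^n$ directly and feed the estimate into the definition of Gelfand--Kirillov growth. By Lemma~\ref{Lemma3},
\[
U^n\subseteq\sum_{i+j+k\le n}V^i(W_jc_\gamma)V^k+V^n .
\]
The naive move is to estimate each summand by $\dim_FV^i\cdot\dim_F(W_jc_\gamma)\cdot\dim_FV^k$ and add up, but the number of triples $(i,j,k)$ with $i+j+k\le n$ is of order $n^3$, which would contaminate the bound with a spurious polynomial factor. The point I would stress instead is that one need not count at all: since the filtrations $V^1\subseteq V^2\subseteq\cdots$ and $W_1\subseteq W_2\subseteq\cdots$ are ascending and every summand has $i,j,k\le n$, each $V^i(W_jc_\gamma)V^k$ is contained in $V^n(W_nc_\gamma)V^n$, hence
\[
U^n\subseteq V^n(W_nc_\gamma)V^n+V^n .
\]

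Next I would apply the crude dimension estimate for a product of finite dimensional subspaces of an algebra: such a product is spanned by the products of basis vectors, so $\dim_FV^n(W_nc_\gamma)V^n\le(\dim_FV^n)^2\dim_F(W_nc_\gamma)$. Right multiplication by $c_\gamma$ is $F$-linear, since $a\mapsto ac_\gamma$ with $(ac_\gamma)(b)=a\gamma(b)$, so $\dim_F(W_nc_\gamma)\le\dim_FW_n=w_\gamma(n)$. With $g(V,n)=\dim_FV^n$ this yields $\dim_FU^n\le g(V,n)^2w_\gamma(n)+g(V,n)$. The trailing term $g(V,n)$ is dominated by $g(V,n)^2w_\gamma(n)$ (both $g(V,n)$ and $w_\gamma(n)$ are $\ge1$ for nonzero $A$; the case $A=0$ being trivial), so $g(U,n)\le 2\,g(V,n)^2w_\gamma(n)$. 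Since $U=V+Fc_\gamma$ generates $C$ while $V$ generates $B$, passing to asymptotic equivalence classes — the constant $2$ and the rescaling in $\preceq$ are absorbed using monotonicity of $g(V,\cdot)$ and $w_\gamma(\cdot)$ — gives $g_C(n)\preceq g_B^2(n)w_\gamma(n)$.

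The only genuine obstacle is the one highlighted above: resisting the term-by-term estimate. Once one observes that the whole sum in Lemma~\ref{Lemma3} collapses into the single triple product $V^n(W_nc_\gamma)V^n$ by monotonicity of the two filtrations, what remains is the standard ``dimension of a product of subspaces is at most the product of the dimensions'' bound together with linearity of multiplication by $c_\gamma$, and then the formalism of the relation $\preceq$ does the rest.
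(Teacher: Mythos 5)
Your proof is correct and is precisely the argument the paper leaves implicit (the corollary is stated with no separate proof as an immediate consequence of Lemma \ref{Lemma3}): collapse the sum into $V^n(W_nc_\gamma)V^n+V^n$ by monotonicity of the two filtrations, bound its dimension by $g(V,n)^2w_\gamma(n)+g(V,n)$, and absorb the constants using the definition of $\preceq$. The only pedantic point is that summands with a degenerate index (e.g.\ $Fc_\gamma$ or $c_\gamma V^k$) sit inside the collapsed product only after enlarging $V$ and $W_n$ by $F\cdot 1$, which changes the dimension count by at most a bounded factor and hence nothing asymptotically.
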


If $A\ni1$, then along with the algebra $C$, we will consider a bigger algebra $C'=\langle B,c_\gamma, e_{11}(1)\rangle$ and its generating subspace
\[U'=V+Fc_\gamma+Fe_{11}(1).\]

\begin{lemma}\label{Lemma4}
\[U'^n\subseteq\sum\limits_{i+j+k\leq n}V^i(W_jc_\gamma)V^k+V^n+\sum\limits_{i+j+k\leq n}V^ie_{11}(W_j)V^k.\]
\end{lemma}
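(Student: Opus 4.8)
The plan is to mimic the proof of Lemma~\ref{Lemma3} almost verbatim, carrying along an extra summand coming from the new generator $e_{11}(1)$. Write $\rhs(n)$ for the claimed right hand side. Since $U' = U + Fe_{11}(1)$ and we already know from Lemma~\ref{Lemma3} that $U^n\subseteq\sum_{i+j+k\le n}V^i(W_jc_\gamma)V^k+V^n\subseteq\rhs(n)$, it suffices to run an induction on $n$ showing $U'\cdot\rhs(n-1)\subseteq\rhs(n)$. The base case $n=1$ is immediate. For the inductive step I would check that multiplying each of the three types of summand in $\rhs(n-1)$ on the left by each of the three types of generator in $U'$ lands in $\rhs(n)$.

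The contributions of $V$ and of $c_\gamma$ on the first two families of summands are handled exactly as in Lemma~\ref{Lemma3} (using $V\cdot\rhs(n-1)\subseteq\rhs(n)$ and the computation $c_\gamma V^i(W_jc_\gamma)V^k\subseteq(W_{i+j}c_\gamma)V^k$), so the only genuinely new computations involve $e_{11}(1)$ and the interaction of the two types of ``$c_\gamma$'' and ``$e_{11}$'' tails. The key local facts I would record are: first, $e_{11}(1)$ is an idempotent matrix unit in $M_\infty(A)\subseteq Lin(B,B\otimes_F\hat A)$, and for $v\in V^i$ one has $v\, e_{11}(W_j)\subseteq e_{11}(W_j)$-type behaviour on the left, i.e.\ left multiplication by $V^i$ only enlarges the row index range in a controlled way while $W_j$ is unchanged, giving $V^i e_{11}(W_j)\subseteq$ a sum of $e_{11}(W_j)$ terms after re-expanding in the basis — more precisely the third family is already closed under left multiplication by $V^i$ up to absorbing $V^i$ into the front. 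Second, $e_{11}(1)\cdot V^i(W_jc_\gamma)V^k$: since $W_jc_\gamma$ maps $B\to Fb_1\otimes W_j\subseteq B\otimes_F A$ (here $b_1$ is the basis element indexed by the $1$ in $e_{11}$, using $B\ni 1$), composing with $e_{11}(1)$ on the left either kills it or reproduces a term of the form $e_{11}(W_{j'})$ with $j'\le i+j$, which sits in the third family. Conversely $c_\gamma\cdot V^i e_{11}(W_j)V^k$: applying the same ``$c_\gamma v = c_{\gamma'}$ with $\gamma'(1)=\gamma(v)\in W_i$'' trick used in Lemma~\ref{Lemma3}, this lands in $(W_{i+j}c_\gamma)$-type terms, hence in the first family. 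All of these keep the total degree $i+j+k$ bounded by $n$, as required.

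The main obstacle I anticipate is bookkeeping the precise matrix identities for how $e_{11}(1)$ multiplies against $c_\gamma$ and against elements of $B$: one must use that $B$ is unital so that $c_\gamma$ and $e_{11}(-)$ both have image supported on a single basis vector, and then carefully track which matrix-unit-like products collapse to zero versus which reproduce a smaller tail. Once one fixes notation for the basis $\{b_i\}$ with $b_1=1$ and writes everything through the isomorphism $Lin(B,B\otimes_F\hat A)\cong\til M_{I\times I}(\hat A)$, these become straightforward (if tedious) $2\times 2$-block computations, and the degree count is exactly the same as in Lemma~\ref{Lemma3}. I would therefore present the step as: ``the new summand is absorbed because $e_{11}(W_j)$ is, up to front multiplication by $V^i$, closed under the generators, and cross terms between the $c_\gamma$-tail and the $e_{11}$-tail fall back into the two previously established families,'' and leave the elementary verification to the reader, paralleling the brevity of the proof of Lemma~\ref{Lemma3}.
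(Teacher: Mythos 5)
Your overall strategy coincides with the paper's (induction on $n$, reduce to $U'\cdot\rhs(n-1)\subseteq\rhs(n)$, and observe that the only new work is in the cross terms between the $c_\gamma$-tail and the $e_{11}$-tail), but the two key cross-term computations — which are exactly the new content of this lemma compared with Lemma 3.3 — are both wrong as stated: you have swapped the target families. Concretely, $c_\gamma\cdot V^ie_{11}(W_j)V^k$ does \emph{not} land in $(W_{i+j}c_\gamma)$-type terms. Writing $c_\gamma v=c_{\gamma'}$ with $\gamma'(b)=\gamma(vb)$ and using that $e_{11}(w)$ kills every basis element except $b_1=1$, one gets $c_{\gamma'}e_{11}(w)=e_{11}(\gamma'(1)w)=e_{11}(\gamma(v)w)$, so the product lies in $e_{11}(W_{i+j})V^k$, i.e.\ in the \emph{third} family; an element of $(W c_\gamma)V^k$ is generically nonzero on infinitely many basis vectors, so the containment you assert is false in general. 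Dually, $e_{11}(1)\cdot V^i(W_jc_\gamma)V^k$ does \emph{not} produce $e_{11}(W_{j'})$-terms: your premise that $W_jc_\gamma$ maps $B$ into $Fb_1\otimes W_j$ is incorrect (its image is $1\otimes W_j\gamma(B)$), and the values of $v(wc_\gamma)v'$ are $v\otimes w\gamma(v'b)$, so applying $e_{11}(1)$ on the left merely extracts the coefficient of $1$ in $v$ and returns an element of $F(W_jc_\gamma)V^k$, a \emph{first}-family term. The paper's proof captures this with the identities $(W_jc_\gamma)=e_{11}(1)(W_jc_\gamma)$ and $e_{11}(1)V^ie_{11}(1)=Fe_{11}(1)$; this insertion of the idempotent is the one genuinely new trick here, and your sketch replaces it by a claim that does not hold.

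The lemma itself is not endangered: if you interchange your two conclusions (the $c_\gamma$ cross term is absorbed by the $e_{11}$-family, the $e_{11}(1)$ cross term by the $c_\gamma$-family), both targets are summands of $\rhs(n)$ and the degree bookkeeping is as you say. But as written, the two containments you assert are false, the justifications offered for them are based on a misidentification of the images of these operators, and the ``elementary verification left to the reader'' is precisely where the proof's content lies; you should carry out the two one-line computations above explicitly, as the paper does.
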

\begin{proof}
Again, denote the right hand side of the inclusion as $\rhs(n)$. We need to check that $c_\gamma\sum\limits_{i+j+k\leq n-1}V^ie_{11}(W_j)V^k\subseteq \rhs(n)$ and \linebreak$e_{11}(1)\rhs(n-1)\subseteq \rhs(n)$. The subspace $e_{11}(W_j)$ lies in $\rho_1$. Hence,
\[c_\gamma V^ie_{11}(W_j)\subseteq e_{11}(\gamma(V^i)W_j)\subseteq e_{11}(W_{i+j})\]
and therefore $e_{11}(W_{i+j})V^k\subseteq \rhs(n-1)$. Furthermore,
\[e_{11}(1)V^i(W_jc_\gamma)V^k=e_{11}(1)V^ie_{11}(1)(W_jc_\gamma)V^k\]
and it remains to notice that $e_{11}(1)V^ie_{11}(1)=Fe_{11}(1)$. This completes the proof of the lemma.
\end{proof}

\begin{corollary}\label{Corollary2}
$g_{C'}(n)\preceq g_B^2(n)w_\gamma(n)$.
\end{corollary}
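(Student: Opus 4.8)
The plan is to read the estimate off Lemma~\ref{Lemma4} by passing to dimensions; the only subtlety is an asymptotic bookkeeping point, discussed at the end. Denote the three summands on the right-hand side of Lemma~\ref{Lemma4} by
\[
X_1=\sum_{i+j+k\leq n}V^i(W_jc_\gamma)V^k,\qquad X_2=V^n,\qquad X_3=\sum_{i+j+k\leq n}V^ie_{11}(W_j)V^k,
\]
so that $U'^n\subseteq X_1+X_2+X_3$ and hence $g_{C'}(n)=\dim_F U'^n\leq \dim_F X_1+\dim_F X_2+\dim_F X_3$. It therefore suffices to bound each $\dim_F X_t$ by $g_B^2(n)w_\gamma(n)$ up to a constant factor.

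First I would collapse $X_1$ and $X_3$ into a single product space each. Since $V^1\subseteq V^2\subseteq\cdots$ and $W_1\subseteq W_2\subseteq\cdots$, and since $a\mapsto ac_\gamma$ and $a\mapsto e_{11}(a)$ are $F$-linear maps (so $W_jc_\gamma\subseteq W_nc_\gamma$ and $e_{11}(W_j)\subseteq e_{11}(W_n)$ whenever $j\leq n$), every summand with $i+j+k\leq n$ satisfies $V^i(W_jc_\gamma)V^k\subseteq V^n(W_nc_\gamma)V^n$ and $V^ie_{11}(W_j)V^k\subseteq V^ne_{11}(W_n)V^n$; hence $X_1\subseteq V^n(W_nc_\gamma)V^n$ and $X_3\subseteq V^ne_{11}(W_n)V^n$. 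Next, for any subspace $Z\subseteq A\wr B$, the map $(v,z,v')\mapsto vzv'$ from $V^n\times Z\times V^n$ to $A\wr B$ is $F$-trilinear (it is the restriction of the multiplication of $A\wr B$, equivalently of the $B$-bimodule action on $Lin(B,B\otimes_FA)$), so the subspace $V^nZV^n$ is the image of a linear map out of $V^n\otimes_F Z\otimes_F V^n$ and therefore $\dim_F(V^nZV^n)\leq (\dim_F V^n)^2\dim_F Z=g_B^2(n)\dim_F Z$. Applying this with $Z=W_nc_\gamma$, a linear image of $W_n$, gives $\dim_F X_1\leq g_B^2(n)w_\gamma(n)$; applying it with $Z=e_{11}(W_n)$, an isomorphic copy of $W_n$ since $e_{11}$ is an injective linear map $A\to M_\infty(A)$, gives $\dim_F X_3\leq g_B^2(n)w_\gamma(n)$. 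Finally $\dim_F X_2=g_B(n)\leq g_B^2(n)w_\gamma(n)$, using that growth functions take values in $[1,\infty)$. Adding up, $g_{C'}(n)\leq 3\,g_B^2(n)w_\gamma(n)$, and since a constant multiple is absorbed by $\preceq$ we conclude $g_{C'}(n)\preceq g_B^2(n)w_\gamma(n)$.

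The only step requiring a bit of care — and the reason this is not literally a one-line remark after Lemma~\ref{Lemma4} — is that one must \emph{not} estimate $\dim_F X_1$ and $\dim_F X_3$ by summing dimensions over all triples $(i,j,k)$ with $i+j+k\leq n$: that would introduce a spurious factor of order $n^2$ (the number of such triples), which is not absorbed by $\preceq$ when $A$ and $B$ have polynomial growth. Exploiting the monotonicity of the filtrations $\{V^n\}$ and $\{W_n\}$ to fold each entire sum into a single product space before taking dimensions is what avoids this. Everything else is immediate from Lemma~\ref{Lemma4} together with the elementary fact that the image of a multilinear map has dimension at most the product of the dimensions of its arguments.
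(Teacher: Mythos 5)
Your proof is correct, and it is essentially the paper's (implicit) argument: the paper states the corollary as an immediate consequence of Lemma~\ref{Lemma4}, obtained exactly as you do by taking dimensions after folding the sums into $V^n(W_nc_\gamma)V^n$, $V^n$, and $V^ne_{11}(W_n)V^n$ via monotonicity of the filtrations. Your closing remark about avoiding the extra polynomial factor from summing over all triples $(i,j,k)$ is precisely why the bound holds in the sharp form $g_{C'}(n)\preceq g_B^2(n)w_\gamma(n)$ needed later for the $GK\dim\leq d+6$ estimates.
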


We say that a linear transformation $\gamma:B\rightarrow A$ is \underline{dense} if for arbitrary linearly independent elements $b_1,\cdots, b_n\in B$ and arbitrary nonzero element $a\in A$, there exists an element $b\in B$ such that $\gamma(b_ib)=0$, $1\leq i\leq n-1$, and $a\gamma(b_nb)\neq0$.

\begin{lemma}\label{Lemma4}
If $\gamma:B\rightarrow A$ is a dense generating linear transformation, then $g_C(n)\sim g_B(n)^2w_\gamma(n)$.
\end{lemma}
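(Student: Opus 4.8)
My plan is to prove the two asymptotic inequalities separately. One direction, $g_C(n)\preceq g_B(n)^2w_\gamma(n)$, is already Corollary~\ref{Corollary1} and needs only that $\gamma$ is generating; so I would concentrate on the reverse bound $g_B(n)^2w_\gamma(n)\preceq g_C(n)$, and this is where density enters. First I would fix a basis $\{b_i\}_{i\in I}$ of $B$ adapted to the filtration $V^1\subseteq V^2\subseteq\cdots$, so that $I_n=\{i:b_i\in V^n\}$ indexes a basis of $V^n$ (hence $|I_n|=g_B(n)$), and likewise a basis $\{a_j\}_{j\in J}$ of $A$ adapted to $W_1\subseteq W_2\subseteq\cdots$, so that $J_n=\{j:a_j\in W_n\}$ indexes a basis of $W_n$ (so $|J_n|=w_\gamma(n)$). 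For $p,s\in I_n$ and $q\in J_n$ I would then single out the transformation $\Phi_{p,q,s}\in Lin(B,B\otimes_FA)$, $x\mapsto b_p\otimes a_q\gamma(b_sx)$. From the bimodule relations one checks $\Phi_{p,q,s}=b_p\,(a_qc_\gamma)\,b_s$; and since $a_q\in W_n$, writing it as a sum of products $\gamma(u_1)\cdots\gamma(u_r)$ with $\sum\deg u_l\le n$ and using $(c_\gamma v_1\cdots c_\gamma v_m)(x)=1\otimes\gamma(v_1)\cdots\gamma(v_{m-1})\gamma(v_mx)$ gives $W_nc_\gamma\subseteq U^{O(n)}$, hence $\Phi_{p,q,s}\in V^n(W_nc_\gamma)V^n\subseteq U^{cn}$ for some absolute constant $c$.

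The heart of the argument — and the step I expect to be the main obstacle — is to show that the family $\{\Phi_{p,q,s}:p,s\in I_n,\ q\in J_n\}$ is linearly independent in $Lin(B,B\otimes_FA)$. Suppose $\sum_{p,q,s}\lambda_{p,q,s}\Phi_{p,q,s}=0$. Evaluating at an arbitrary $x\in B$ gives $\sum_{p,q,s}\lambda_{p,q,s}\,b_p\otimes a_q\gamma(b_sx)=0$ in $B\otimes_FA$, and since the $b_p$ are part of a basis of $B$ this forces, for each fixed $p$, that $\sum_s c_{p,s}\gamma(b_sx)=0$ for all $x$, where $c_{p,s}=\sum_q\lambda_{p,q,s}a_q\in A$. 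If some $c_{p,s}$ were nonzero, I would set $S_0=\{s:c_{p,s}\neq0\}$, pick $s_0\in S_0$, and apply density to the linearly independent family $\{b_s:s\in S_0\}$ (with $b_{s_0}$ placed last) and the nonzero element $c_{p,s_0}$: density yields $b\in B$ with $\gamma(b_sb)=0$ for $s\in S_0\setminus\{s_0\}$ and $c_{p,s_0}\gamma(b_{s_0}b)\neq0$. Substituting $x=b$ would then give $\sum_s c_{p,s}\gamma(b_sb)=c_{p,s_0}\gamma(b_{s_0}b)\neq0$, a contradiction. So all $c_{p,s}=0$, and since the $a_q$, $q\in J_n$, are linearly independent, all $\lambda_{p,q,s}=0$.

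Granting this, $g_C(cn)=\dim_FU^{cn}\ge|I_n|^2|J_n|=g_B(n)^2w_\gamma(n)$, i.e.\ $g_B(n)^2w_\gamma(n)\preceq g_C(n)$, which together with Corollary~\ref{Corollary1} yields $g_C(n)\sim g_B(n)^2w_\gamma(n)$. The reason density is indispensable is exactly the linear-independence step: without it the relations $\sum_s c_{p,s}\gamma(b_sx)=0$ need not force $c_{p,s}=0$ — the subspaces $\gamma(b_sB)$ could overlap badly — so the $\Phi_{p,q,s}$ could satisfy many relations and $U^{cn}$ could be much smaller than $g_B(n)^2w_\gamma(n)$; density is precisely the separation property that rules this out. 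The degree bookkeeping that places $\Phi_{p,q,s}$ in $U^{cn}$ I expect to be routine.
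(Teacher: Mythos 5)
Your proposal is correct and follows essentially the same route as the paper: the paper likewise bounds $U^{3n}\supseteq V^n(W_nc_\gamma)V^n$ and proves linear independence of the elements $b_i(a_jc_\gamma)b_k$ by evaluating at $b\in B$, extracting for each fixed $i$ the relation $\sum_{j,k}\gamma_{ijk}a_j\gamma(b_kb)=0$, and invoking density to reach a contradiction, with the upper bound supplied by Corollary~\ref{Corollary1}. Your only cosmetic deviation is restricting density to the subset $S_0$ of indices with nonzero coefficient, which is harmless.
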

\begin{proof}
It is easy to see that
\[V^n(W_nc_\gamma)V^n\subseteq U^{3n}.\]
We will show that $\dim_FV^n(W_nc_\gamma)V^n=(\dim_FV^n)^2w(n)$. Let $b_1,\cdots, b_r$ be a basis of $V^n$ and let $a_1,\cdots, a_t$ be a basis of $W_n$. We need to verify that elements $b_i(a_jc_\gamma)b_k$ are linearly independent.

For an arbitrary element $b\in B$ and arbitrary coefficients $\gamma_{ijk}\in F$, we have
\[\Big(\sum\gamma_{ijk}b_i(a_jc_\gamma)b_k\Big)(b)=\sum\gamma_{ijk}b_i\otimes a_j\gamma(b_kb).\]
Since the elements $b_i$ are linearly independent, it follows that for every $i$,
\[\sum\limits_{j,k}\gamma_{ijk}a_j\gamma(b_kb)=0.\]

Let $\gamma_{i_0j_0k_0}\neq 0$. By density of $\gamma$, there exists an element $b\in B$ such that $\gamma(b_\ell b)=0$ for $\ell\neq k_0$ and $(\sum\gamma_{i_0jk_0}a_j)\gamma(b_{k_0}b)\neq 0$, a contradiction.
\end{proof}

\begin{lemma}\label{Lemma5}
Suppose that the algebra $B$ has a basis $b_1, b_2, \cdots$ that consists of invertible elements. Suppose that $A\ni1$. The basis $\{b_i\}_{i\in I}$ defines the isomorphism $Lin(B, B\otimes_FA)\cong\til{M}_{I\times I}(A)$. Let $\gamma: B\to A $ be a generating linear transformation. Then the algebra $C'=\langle B,c_\gamma,e_{11}(1)\rangle$ contains $M_\infty(A)$.
\end{lemma}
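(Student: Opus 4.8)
The plan is to place every matrix unit $e_{ij}(a)$ of $\til{M}_{I\times I}(A)$ inside $C'$, working in the picture $Lin(B,B\otimes_FA)\cong\til{M}_{I\times I}(A)$ attached to the basis $\{b_i\}$; recall that $e_{ij}(a)$ is the transformation carrying $b_j$ to $b_i\otimes a$ and the other $b_k$ to $0$, that these span $M_\infty(A)$, and that $e_{ij}(a)e_{kl}(a')=\delta_{jk}e_{il}(aa')$. I will use that for $b\in B$ one has $bf=L_bf$ and $fb=fL_b$ in $A\wr B$, where $L_b$ is left multiplication by $b$; its matrix $A_{L_b}$ has $(k,l)$ entry the coefficient of $b_k$ in $bb_l$, so the $l$-th column of $A_{L_b}$ is the coordinate vector of $bb_l$ and its first row is $\big(\text{coefficient of }b_1\text{ in }bb_l\big)_{l\in I}$. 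Also $b_i^{-1}\in B\subseteq C'$ since the $b_i$ are units.

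First I would produce the first column with unit entries: for $i\in I$ put $d_i=b_ib_1^{-1}\in C'$; then $d_ie_{11}(1)=L_{d_i}e_{11}(1)$ has matrix whose first column is the coordinate vector of $d_ib_1=b_i$ (the $i$-th standard vector) and whose other columns vanish, i.e.\ $d_ie_{11}(1)=e_{i1}(1)\in C'$. Next I would fill the $(1,1)$ corner: $b_1c_\gamma$ acts by $b'\mapsto(b_1\otimes1)(1\otimes\gamma(b'))=b_1\otimes\gamma(b')$, so its matrix has first row $\big(\gamma(b_l)\big)_{l\in I}$ and is zero elsewhere; multiplying on the right by $e_{i1}(1)$ extracts its $i$-th column, giving $(b_1c_\gamma)e_{i1}(1)=e_{11}(\gamma(b_i))\in C'$. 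Hence $e_{11}(V)\subseteq C'$ for $V=\text{span}_F\gamma(B)$, and since $e_{11}(u)e_{11}(v)=e_{11}(uv)$ while $\gamma$ is generating (so $V$ generates $A$), $e_{11}(A)\subseteq C'$; consequently $e_{i1}(a)=e_{i1}(1)e_{11}(a)\in C'$, so the whole first column $\sum_{i}e_{i1}(A)$ lies in $C'$.

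The remaining step, which I expect to be the crux, is to reach the other columns. It suffices to get $e_{1j}(1)\in C'$ for every $j$: then $e_{1j}(a)=e_{11}(a)e_{1j}(1)\in C'$ and $e_{ij}(a)=e_{i1}(1)e_{1j}(a)\in C'$, whence $M_\infty(A)=\sum_{i,j}e_{ij}(A)\subseteq C'$. For $b\in B$ the element $e_{11}(1)b=e_{11}(1)L_b\in C'$ has matrix equal to the first row of $A_{L_b}$ placed in row $1$; so $e_{1j}(1)$ arises as $e_{11}(1)b^{(j)}$ provided one can find $b^{(j)}\in B$ whose matrix $A_{L_{b^{(j)}}}$ has first row the $j$-th standard vector, i.e.\ such that $b^{(j)}b_l$ has coefficient of $b_1$ equal to $\delta_{jl}$ for \emph{every} $l\in I$. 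If the basis is group-like with $b_1=1$, one may just take $b^{(j)}=b_j^{-1}$; the force of the lemma is that such $b^{(j)}$ exists for an arbitrary basis of invertible elements, and it is precisely here that the invertibility of all the $b_i$ (together with the running assumptions on $B$) must be exploited. With $b^{(j)}$ in hand, the lemma follows from the identities displayed above.
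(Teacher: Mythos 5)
Your first two steps are correct, and they are essentially the same computations the paper makes: $(b_ib_1^{-1})e_{11}(1)=e_{i1}(1)$ and $(b_1c_\gamma)e_{i1}(1)=e_{11}(\gamma(b_i))$ do put $e_{11}(A)$, and hence the whole first column $\sum_i e_{i1}(A)$, inside $C'$. The problem is the third step, which you yourself identify as the crux and then leave unproved: you reduce $e_{1j}(1)\in C'$ to the existence of an element $b^{(j)}$ whose left-multiplication matrix has first row equal to the $j$-th standard vector, and you offer no construction of $b^{(j)}$ beyond the remark that the invertibility of the $b_i$ ``must be exploited'' at this point. As written, nothing in your argument actually uses that hypothesis, and what you have established is only $\sum_i e_{i1}(A)\subseteq C'$, not $M_\infty(A)\subseteq C'$. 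That is a genuine gap, and it sits exactly at the content of the lemma.

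The paper closes this step by right multiplication by inverses of basis elements: it uses the identity $e_{ij}(1)=b_ie_{11}(1)b_j^{-1}$, i.e., in your notation it takes $b^{(j)}=b_j^{-1}$, concludes $M_\infty(F)\subseteq C'$, and then observes that $M_\infty(F)$ together with $e_{11}(A)$ generates $M_\infty(A)$ (for instance via $e_{ij}(a)=e_{i1}(a)e_{1j}(1)$). Your hesitation is not unreasonable: the matrix of $e_{11}(1)b_j^{-1}$ has first row $\bigl(\lambda_1(b_j^{-1}b_l)\bigr)_{l\in I}$, where $\lambda_1$ denotes the $b_1$-coordinate, so the identity requires $b_1=1$ and $\lambda_1(b_j^{-1}b_l)=\delta_{jl}$ for $l\neq j$ --- automatic for a group-like or suitably normalized basis, but an extra property of the basis that the paper does not comment on. Nevertheless, a complete proof must either verify this identity for the basis at hand or otherwise exhibit $b^{(j)}$; your proposal does neither, so the essential claim of the lemma remains unproved in your write-up.
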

\begin{proof}
We have $e_{ij}(1)=b_ie_{11}(1)b_j^{-1}$, hence $C'\supseteq M_\infty(F)$. If $\gamma(b_i)=a$, then $c_\gamma be_{11}(1)=e_{11}(a)$. Since $\gamma$ is a generating linear transformation, it follows that $C'\supseteq e_{11}(A)$. Now it remains to notice that $M_\infty(F)$ and $e_{11}(A)$ generate $M_\infty(A)$.
\end{proof}

\section{Radical Algebras}\label{Section4}

In this section, we will prove embedding theorems \ref{Theorem1}-\ref{Theorem3} for Jacobson radical algebras.

\begin{lemma}\label{Lemma6}
For an arbitrary Jacobson radical algebra $A$, there exists a Jacobson radical algebra $\til{A}$ and an element $u\in\til{A}$, $u^3=0$, such that $A$ is embeddable in the right ideal $u\til{A}$ (resp. left ideal $\til{A}u$).
\end{lemma}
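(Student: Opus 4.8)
The natural candidate is a matrix wreath product. I would like to take $B$ to be an algebra generated by a single invertible-type element $u$ with $u^3 = 0$, and then realize $A$ inside $A \wr B$ (or a suitable subalgebra) as $u \til A$. The obvious first attempt is $B = F[u]/(u^3)$, a local ring, but this $B$ is unital and finite-dimensional, so its Jacobson radical is nilpotent and the wreath product machinery from \S\ref{Section2}--\ref{Section3} applies in a controlled way. The element $u$ sits in $\tilde A := A \wr B$ (more precisely in the relevant subalgebra), and since $u^3 = 0$ in $B$, the induced left/right multiplication operator on $B \otimes_F A$ also cubes to zero, so $u^3 = 0$ in $\til A$.

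First I would set $B = Fu + Fu^2$ (the non-unital radical square-zero-cubed algebra, or its unital hull as convenient) and form $A \wr B = B + Lin(B, B \otimes_F A)$. The key computation is to identify $u\cdot Lin(B, B\otimes_F A)$, using the bimodule action $(uf)(b') = (u\otimes 1) f(b')$: in matrix terms, multiplying a matrix on the left by $L_u$ shifts its rows according to left multiplication by $u$ in $B$. I would choose the basis $\{u, u^2\}$ of $B$, so $Lin(B, B\otimes_F A) \cong \til M_{2\times 2}(A)$, and check that the copy of $A$ sitting in, say, the $(2,1)$-entry position lands inside $u\til A$, because $L_u(u) = u^2$ realizes the row shift $1 \mapsto 2$. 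That gives an embedding $A \hookrightarrow u\til A$ as desired; the symmetric statement follows by using $\til A u$ and the right-multiplication side, i.e. the column shift.

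The point that needs care — and where I expect the real work to be — is verifying that $\til A$ is Jacobson radical. Taking $\til A$ to be all of $A \wr B$ will not do, since $B$ itself must be radical and more importantly $Lin(B, B\otimes_F A) \cong \til M_{2\times 2}(A)$ need not be radical just because $A$ is (infinite matrix rings over radical rings can fail to be radical). So I would instead take $\til A$ to be the subalgebra $B + S$ for an appropriate $S$ with $M_\infty(A) \subseteq S \subseteq S(A,B)$ — here $S(A,B)$ consists of matrices with finitely many nonzero rows — or perhaps simply $B + M_\infty(A)$, which is visibly large enough to contain a copy of $A$ in a corner and lie inside $u\til A$. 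Then radicality reduces to: $B$ is radical (clear, it is nilpotent), $M_\infty(A)$ is radical (this is the standard fact that $M_\infty$ of a radical ring is radical, via the quasi-inverse being computable entrywise within a finite block), and the extension $B + M_\infty(A)$ of a radical ideal by a radical quotient is radical. I would need to confirm $B M_\infty(A) + M_\infty(A) B \subseteq M_\infty(A)$, which holds since left multiplication by $L_u$ and right multiplication by $u$ preserve the "finitely many nonzero entries" condition (row/column shifts). The one genuinely delicate check is that the copy of $A$ we singled out is closed under multiplication in $\til A$ and that the map $A \to u\til A$ is an algebra homomorphism, not merely linear — this follows from $A_{fg} = A_f A_g$ together with the fact that $e_{21}(a) = L_u \cdot e_{11}(a)$-type identities let us write the chosen corner copy as $u$ times something, and products of such elements again have the form $u\cdot(\text{matrix})$ because $u \cdot u = u^2 \neq 0$ but behaves predictably. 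I would present the left-ideal case in full and remark that the right-ideal case is dual.
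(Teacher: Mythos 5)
Your construction is the same as the paper's in outline (the two-dimensional nilpotent $B=Fu+Fu^2$, $u^3=0$, and a corner copy of $A$ inside $u\cdot(A\wr B)$), but the step you defer as ``the one genuinely delicate check'' is exactly where your choice fails. The copy of $A$ you designate is the off-diagonal corner obtained by left-multiplying the $(1,1)$ matrix units by $u$ (in your matrix convention, $a\mapsto E_{21}(a)=L_u\cdot E_{11}(a)$). Off-diagonal matrix units have mismatched inner indices, so $E_{21}(a)E_{21}(a')=0$ for all $a,a'$: this corner is a square-zero linear copy of $A$, not a subalgebra isomorphic to $A$, and the map is not an algebra embedding. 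The heuristic that ``products of such elements again have the form $u\cdot(\text{matrix})$'' does not repair this -- the products are simply zero. The fix is to land on the diagonal: in the paper's notation $e_{ij}(a):b_k\mapsto\delta_{ik}b_j\otimes a$, one checks $u\,e_{21}(a)=e_{22}(a)$, and $a\mapsto e_{22}(a)$ is multiplicative and injective, so $e_{22}(A)\cong A$ lies in the right ideal $u\widetilde{A}$; dually $e_{21}(a)u=e_{11}(a)$ gives $e_{11}(A)\subseteq\widetilde{A}u$ for the left-ideal statement. That one-line correction is precisely the paper's proof.

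On the radicality side, your detour through $B+M_\infty(A)$ (or $B+S$) is based on a misplaced worry: since $\dim_FB=2$, the algebra $Lin(B,B\otimes_FA)\cong\widetilde{M}_{2\times2}(A)$ is just $M_2(A)$ -- there are no infinite matrices here, and indeed $M_\infty(A)$, $S(A,B)$ and $\widetilde{M}_{2\times2}(A)$ all coincide with $M_2(A)$. Because $M_2$ of a Jacobson radical algebra is Jacobson radical and $B$ is nilpotent, the whole wreath product $\widetilde{A}=A\wr B=B+M_2(A)$ is Jacobson radical, which is exactly what the paper takes; your restricted subalgebra is therefore not a restriction at all, only an unnecessary complication. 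With the corner corrected and $\widetilde{A}=A\wr B$, your argument coincides with the paper's.
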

\begin{proof}
Consider the two dimensional nilpotent algebra $B$ with a basis $b_1=b$, $b_2=b^2$, $b^3=0$. Let $A$ be a Jacobson radical algebra. Consider the matrix wreath product $A \wr B=B+M_2(A)$. Clearly, $A\wr B$ is a Jacobson radical algebra. For $1\leq i,j\leq 2$ and an element $a\in A$, we consider the linear transformation $e_{ij}(a)$ that maps a basic element $b_k$ to $\delta_{ik}b_j\otimes a$. Then $be_{21}(a)=e_{22}(a)$. Hence $e_{22}(A)\subseteq b(A\wr B)$, which completes the proof of the lemma.
\end{proof}

\begin{proof}[Proof of Theorem \ref{Theorem1}]
Let $A$ be a countable dimensional Jacobson radical algebra. By Lemma \ref{Lemma6}, there exists a countable dimensional Jacobson radical algebra $\til{A}$ and an element $u\in\til{A}$, $u^3=0$, such that $A$ embeds in $\til{A}u$.

Let $B$ be a finitely generated infinite dimensional nil algebra of E. S. Golod \cite{11}. Let $\hat{B}=B+F\cdot1$ be its unital hull. Let $\gamma:\hat{B}\rightarrow\til{A}$ be a generating linear transformation. In the matrix wreath product $\til{A}\wr\hat{B}$, consider the element $c_\gamma:\hat{B}\rightarrow\hat{B}\otimes_F\til{A}$, $c_\gamma(b)=1\otimes\gamma(b)$.

Choose a basis $\{b_i\}_{i\in I}$ of the algebra $\hat{B}$, $b_1=1$. It gives rise to an isomorphism $Lin(\hat{B},\hat{B}\otimes_F\til{A})\rightarrow\til{M}_{I\times I}(\til{A})$. Consider the element $e_{11}(u)\in Lin(\hat{B},\hat{B}\otimes_F\til{A})$ that sends $b_1=1$ to $1\otimes u$ and sends other basic elements to zero. Consider the subalgebra $C$ of $\til{A}\wr\hat{B}$ generated by $B, C_\gamma, e_{11}(u)$.

Consider also the right ideal
\[\rho_1=\{c_\alpha | \alpha \in Lin(\hat{B},\til{A}), c_\alpha(b)=1\otimes\alpha(b)\}.\]
For any $\alpha,\beta\in Lin(\hat{B},\til{A})$, we have $c_\alpha c_\beta=\alpha(1)c_\beta$. Hence, the mapping $\pi:\rho_1\rightarrow\til{A}$, $\pi(c_\alpha)=\alpha(1)$ is a homomorphism.

The subalgebra $\langle c_\gamma\hat{B}\rangle$ generated by $c_\gamma\hat{B}$ lies in $\rho_1$. For any basic element $b_i$, we have $\pi(c_\gamma b_i)=\gamma(b_i)$. Since $\gamma$ is a generating linear transformation, it follows that the restriction of $\pi$ to $\langle c_\gamma B\rangle$ is surjective. Hence
\[C\supseteq\langle c_\gamma\hat{B}\rangle e_{11}(u)=e_{11}(\til{A}u)\supseteq e_{11}(A).\]

It remains to show that the subalgebra $C$ is Jacobson radical. We will start by showing that the right ideal $Fc_\gamma+c_\gamma C$ is Jacobson radical. The right ideal $Fc_\gamma+c_\gamma C$ is contained in $\rho_1$ and contains $\langle c_\gamma\hat{B}\rangle$. Hence, the restriction of the homomorphism $\pi$ to $Fc_\gamma+c_\gamma C$ is surjective. The kernel of this homomorphism lies in
\[\rho_1'=\{c_\alpha|\alpha(1)=0\},\]
with $(\rho_1')^2=(0).$ This proves that the right ideal $Fc_\gamma+c_\gamma C$ of the algebra $C$ is Jacobson radical. Hence, $c_\gamma$ lies in the Jacobson radical $Jac(C)$ of the algebra $C$.

The ideal generated by $e_{11}(u)$ in the subalgebra $\langle B,e_{11}(u)\rangle$ lies in $M_{I\times I}(uF[u])$, hence this ideal is nilpotent. Hence $e_{11}(u)\in Jac(C)$.

Finally, it follows that $C/Jac(C)=B+Jac(C)/Jac(C)$, a nil algebra, which implies that $C=Jac(C)$. The algebra $C$ is finitely generated. This completes the proof of Theorem \ref{Theorem1}.
\end{proof}

Now we turn to Theorem \ref{Theorem2}. Let $A$ be a countable dimensional algebra of Gelfand-Kirillov dimension $\leq d$. Let the algebra $B$ be generated by a finite dimensional subspace $V$. Recall that for a linear transformation $\gamma:B\rightarrow A$, we denote
\[W_n=\sum\limits_{i_1+\cdots+i_r\leq n}\gamma(V^{i_1})\cdots\gamma(V^{i_r}), \hspace{1cm} w_\gamma(n)=\dim_FW_n.\]

\begin{lemma}\label{Lemma7}
There exists a generating linear transformation \linebreak$\gamma:B\rightarrow A$ such that $w_\gamma(n)\leq n^{d+\epsilon_n}$, where $\epsilon_n>0$, $\epsilon_n\rightarrow0$ as $n\rightarrow\infty$.
\end{lemma}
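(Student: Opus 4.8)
The plan is to build $\gamma$ greedily, enumerating a basis of $A$ and, at each stage, defining $\gamma$ on finitely many new basis vectors of $B$ so that the newly added generators of $A$ are "cheap" — that is, each hits a generating subspace of $A$ whose $n$-th power has dimension at most $n^{d+\epsilon}$ — while never destroying the generation or the size estimates built so far. First I would fix a generating subspace $U$ of $A$ with $\dim_F U<\infty$; since $GK\dim A\le d$, for every $\epsilon>0$ there is a constant $c$ with $\dim_F U^n\le c\,n^{d+\epsilon}$ for all $n$, and by passing to a sufficiently large power $U^{(0)}=U^{m}$ one can absorb the constant $c$ into the exponent, i.e. arrange $\dim_F(U^{(0)})^n\le n^{d+\epsilon_0'}$ for all $n\ge 1$ with $\epsilon_0'$ as small as we like (this is the standard fact that $GK\dim$ is unchanged under passing to powers of the generating space, and that the constant can be swallowed by $n^{\delta}$ for $n$ large — for small $n$ one enlarges $U^{(0)}$ further). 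The point is that a single generating subspace $A_0\subseteq A$ of finite dimension with $\dim_F A_0^n\le n^{d+\epsilon_n^{(0)}}$, $\epsilon_n^{(0)}\to 0$, exists.

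Next I would set up the enumeration. Fix a basis $v_1,v_2,\dots$ of the countable-dimensional space $A$ and a basis $b_1,b_2,\dots$ of $B$ adapted to the filtration $V^1\subseteq V^2\subseteq\cdots$, so $V^n=\mathrm{span}_F(b_1,\dots,b_{d_n})$ where $d_n=\dim_F V^n$. Build $\gamma$ in stages indexed by $k\ge 1$. At stage $k$: let $a_{k}^{(1)},\dots,a_k^{(s_k)}$ be a finite set of elements of $A_0$ whose products of length $\le k$ span $A_0^k$ (so that, cumulatively, the chosen images will generate $A$, since $\bigcup_k A_0^k=A$). I would choose a block of fresh basis vectors $b_{\ell},b_{\ell+1},\dots$ of $B$ not yet used, declare $\gamma(b_{\ell+i})=a_k^{(i+1)}$, and — crucially — set $\gamma$ equal to $0$ on all basis vectors used to bridge between blocks, and also arrange, by reindexing, that the new generators $a_k^{(i)}$ enter $W_n$ only for $n$ large compared to their "cost." Concretely: choose the fresh block so that it sits inside $V^{N_k}\setminus V^{N_k-1}$ for a rapidly growing sequence $N_k$; then any product $\gamma(V^{i_1})\cdots\gamma(V^{i_r})$ with $\sum i_t\le n$ that actually involves a stage-$k$ generator forces some $i_t\ge N_k$, hence $n\ge N_k$, so $W_n$ only "sees" generators from stages $k$ with $N_k\le n$.

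Now the size estimate. By construction $W_n\subseteq A_0^{\,?}$ — more precisely, every product appearing in $W_n$ is a product of elements of $A_0$ (the $\gamma$-images are elements of $A_0$), and the total number of factors is bounded by $n$ (each $\gamma(V^{i_t})$ contributes elements of $A_0$, and $\sum i_t\le n$ with each $i_t\ge 1$ gives $r\le n$). Hence $W_n\subseteq A_0^{\,n}$, so $w_\gamma(n)=\dim_F W_n\le \dim_F A_0^n\le n^{d+\epsilon_n^{(0)}}$, and we take $\epsilon_n=\epsilon_n^{(0)}$. That $\gamma$ is a generating linear transformation follows because $\gamma(B)\supseteq\bigcup_k\{a_k^{(i)}\}$ generates $\bigcup_k A_0^k=A_0$-generated subalgebra $=A$ (as $A_0$ generates $A$). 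The main obstacle is the bookkeeping in the previous paragraph: one must verify that $r\le n$ (not merely $r\le$ some function of $n$) so that $W_n$ genuinely lands in $A_0^n$ rather than a larger power — this needs each $\gamma(V^{i_t})$ to be spanned by products of $A_0$-generators of length at most $i_t$, which is exactly arranged by the staging condition $N_k\to\infty$ together with the fact that $\gamma(V^{i})$ for small $i$ only involves low-cost generators. Everything else (countability, enumeration, disjointness of blocks) is routine.
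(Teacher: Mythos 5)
Your construction presupposes that $A$ has a finite dimensional generating subspace: you ``fix a generating subspace $U$ of $A$ with $\dim_F U<\infty$'' and derive everything ($W_n\subseteq A_0^n$, generation of $A$ by $\bigcup_k\{a_k^{(i)}\}$) from that single $A_0$. But in Lemma \ref{Lemma7} the algebra $A$ is only countable dimensional (it is the Jacobson radical algebra $\til{A}$ in the proof of Theorem \ref{Theorem2}, and later an arbitrary stable nil or primitive algebra), so it need not be finitely generated, and then no such $U$ exists. For non--finitely generated $A$, the hypothesis $GK\dim A\leq d$ only says that each finitely generated subalgebra has growth eventually bounded by $n^{d+\epsilon}$, with the threshold depending on the subalgebra; the whole point of the lemma is to control $w_\gamma(n)$ while $\gamma(B)$ must generate an algebra that no finite dimensional subspace generates. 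Your bound $W_n\subseteq A_0^n$ collapses in this situation, and indeed the staging by levels $N_k$ in your write-up does no work in your own argument (once all $\gamma$-images lie in a fixed $A_0$, the inclusion $W_n\subseteq A_0^n$ holds for trivial reasons), which is a symptom that the proof only covers the finitely generated case.

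The repair is essentially the paper's proof, and your $N_k$-staging is the right germ of it: enumerate a basis $a_1,a_2,\dots$ of $A$ (not a basis of powers of one fixed $A_0$), and for each $k$ use $GK\dim\langle a_1,\dots,a_k\rangle\leq d$ to choose a threshold $n_k$ (increasing in $k$) with $g_k(n)\leq n^{d+\frac1k}$ for all $n\geq n_k$, where $g_k(n)$ is the growth of the subalgebra generated by $a_1,\dots,a_k$. Then place $a_k$ at filtration level $n_k$: write $V^{n_k}=V^{n_{k-1}}\oplus V_k'\oplus Fv_k$, set $\gamma(V_k')=0$, $\gamma(v_k)=a_k$. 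For $n_k\leq n<n_{k+1}$, any product $\gamma(V^{i_1})\cdots\gamma(V^{i_r})$ with $i_1+\cdots+i_r\leq n$ can only involve $a_1,\dots,a_k$ and has $r\leq n$ factors, so $w_\gamma(n)\leq g_k(n)\leq n^{d+\frac1k}$, and $\epsilon_n=\frac1k\to 0$. In other words, the generating set must be allowed to grow with $k$, and the threshold $N_k$ must be tied to the (worsening) growth constants of the subalgebra generated by the first $k$ basis elements; with only one fixed $A_0$ the statement you prove is weaker than the lemma actually needed.
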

\begin{proof}
Let $a_1, a_2, \cdots$ be a basis of the algebra $A$. Let
\[g_k(n)=\dim_F \text{span}(a_{i_1}\cdots a_{i_r}; 1\leq r\leq n; 1\leq i_1,\cdots, i_r\leq k).\]
From $GK\dim A\leq d$, it follows that there exists an increasing sequence $n_k$, $k\geq 1$, such that $g_k(n)\leq n^{d+\frac{1}{k}}$ as soon as $n\geq n_k$. For $n\geq n_1$, choose $k$ such that $n_k\leq n< n_{k+1}$. Let $\epsilon_n=\frac{1}{k}$. It is clear that $\epsilon_n\rightarrow0$ as $n\rightarrow\infty$. Choose a subspace $V_k'\subset V^{n_k}$ and an element $v_k\in V^{n_k}$ such that $V^{n_k}=V^{n_{k-1}}\oplus V_k'\oplus Fv_k$ is a direct sum of subspaces. Then $B=V_1'\oplus Fv_1\oplus V_2'\oplus Fv_2\oplus\cdots$. Define a linear transformation $\gamma: B\rightarrow A$ via $\gamma(V_i')=0$, $i\geq 1$, $\gamma(v_i)=a_i$.

The subspace $W_n$ is spanned by $\gamma(V^{i_1})\cdots\gamma(V^{i_r})$, $i_1+\cdots+i_r\leq n$. Hence, $\gamma(V^{i_1})\cdots\gamma(V^{i_n})\subseteq \text{span}_F(a_{j_1}\cdots a_{j_r}; 1\leq j_1,\cdots, j_r\leq k)$. Now we get $w_\gamma(n)\leq g_k(n)\leq n^{d+\frac{1}{k}}$ as $n\geq n_k$. This completes the proof of the lemma.
\end{proof}

\begin{proof}[Proof of Theorem \ref{Theorem2}]
Let $A$ be a countable dimensional Jacobson radical algebra of Gelfand-Kirillov dimension $\leq d$. Let the ground field $F$ be countable. In \cite{18}, T. Lenagan and A. Smoktunowicz constructed a finitely generated nil $F$-algebra of finite Gelfand-Kirillov dimension. In \cite{19}, T. Lenagan, A. Smoktunowicz, and A. Young refined the argument of \cite{18} to construct a finitely generated nil algebra $B$ of Gelfand-Kirillov dimension $\leq 3$.

Following Lemma \ref{Lemma7}, there exists a generating linear transformation $\gamma:\hat{B}\rightarrow\til{A}$ such that $w_\gamma(n)\leq n^{d+\epsilon_n}$, $\epsilon_n\rightarrow0$ as $n\rightarrow\infty$. As shown above, the algebra $A$ embeds in a finitely generated algebra $C'=\langle B,c_\gamma, e_{11}(u)\rangle$. By Corollary \ref{Corollary2}, $g_{c'}(n)\preceq g_B(n)^2w_\gamma(u)$. This implies $GK\dim C'\leq d+6$.  This completes the proof of Theorem \ref{Theorem2}.
\end{proof}

For the proof of Theorem \ref{Theorem3}, we need to recall more details about the Golod-Shafarevich inequality (see \cite{12}) and Golod's construction \cite{11}.

Let $F\langle x_1, \cdots, x_m\rangle$ be the free associative algebra on $m$ free generators, $m\geq 2$. We consider the free algebra without 1, i.e., it consists of formal linear combinations of nonempty words in $x_1, \cdots, x_m$. Assigning degree 1 to all variables $x_1, \cdots, x_m$, we make $F\langle x_1, \cdots, x_m\rangle$ a graded algebra. The degree $\deg(a)$ of an arbitrary element $a\in F\langle x_1, \cdots, x_m\rangle$ is defined as the minimal degree of a nonzero homogeneous component of $a$.

Let $R\subset F\langle x_1,\cdots, x_m\rangle$ be a subset containing finitely many elements of each degree.

\textit{Golod-Shafarevich Condition:} If there exists a number $0<t_0<1$ such that
\[\sum\limits_{a\in R}t_0^{\deg(a)}<\infty \text{\hspace{0.2cm} and\hspace{0.2cm}} 1-mt_0+\sum\limits_{a\in R}t_0^{\deg(a)}<0,\]
then the algebra $\langle x_1, \cdots, x_m|R=0\rangle$ presented by the set of generators $x_1, \cdots, x_m$ and the set of relations $R$ is infinite dimensional.

Recall that a function $g:N\rightarrow[1,\infty)$ is said to be subexponential if $\lim\limits_{n\rightarrow\infty}\dfrac{g(n)}{e^{\alpha n}}=0$ for any $\alpha>0$. A finitely generated algebra $A$ has subexponential growth if its growth function $g_A(n)$ is subexponential. It is equivalent to $g_A(n)\precneqq e^n$.

A (not necessarily finitely generated) algebra $A$ is of locally subexponential growth if every finitely generated subalgebra of $A$ is of subexponential growth.

\begin{lemma}\label{Lemma8}
Let $F$ be a countable field and let $A$ be a countable dimensional $F$-algebra of locally subexponential growth. Then there exists a subset $R\subset F\langle x_1,\cdots, x_m\rangle$ satisfying the Golod-Shafarevich condition and such that the algebra $F\langle x_1, \cdots, x_m|R=0\rangle \otimes_F A$ is nil.
\end{lemma}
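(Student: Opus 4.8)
The plan is to build the set $R$ by a countable exhaustion argument: enumerate the homogeneous pieces of $F\langle x_1,\dots,x_m\rangle$ and of $A$, and at each stage throw into $R$ finitely many relations of sufficiently high degree so that, in the limit, every element of $F\langle x_1,\dots,x_m\mid R=0\rangle\otimes_F A$ becomes nilpotent, while the Golod--Shafarevich series $\sum_{a\in R}t_0^{\deg(a)}$ stays convergent and small enough that $1-mt_0+\sum_{a\in R}t_0^{\deg a}<0$. This is exactly the pattern of Golod's original construction, except that here I must make the \emph{tensor product with $A$} nil rather than the algebra itself, and this is where the locally subexponential growth hypothesis enters.

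First I would fix $m\geq 2$ and a value $0<t_0<1$ small enough that $1-mt_0<-1$, say; this leaves a gap of size $>1$ to be consumed by the relation series. Since $F$ is countable and $A$ is countable dimensional, the algebra $F\langle x_1,\dots,x_m\rangle\otimes_F A$ is countable; list its elements as $w_1,w_2,\dots$, where $w_k$ has no constant term (we only need to kill elements of the non-unital algebra, and nilpotence of a general element reduces to that). The construction is inductive: having chosen a finite set $R_{k-1}$ of homogeneous relations, I pick a power $N_k$ and adjoin to $R$ the homogeneous components (in the grading by degree in the $x_i$'s) of $w_k^{N_k}$, viewed inside $F\langle x_1,\dots,x_m\rangle\otimes_F A$ and then "pushed down" to relations in $F\langle x_1,\dots,x_m\rangle$ by taking, for a fixed basis $\{a_j\}$ of $A$, the coefficient of each $a_j$. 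Modulo the ideal generated by $R$, this forces $w_k^{N_k}=0$ in the quotient tensored with $A$, so the quotient tensor $A$ is nil.

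The crux is controlling the contribution of these relations to the Golod--Shafarevich series. When I expand $w_k^{N_k}$, its homogeneous components of degree $d$ in the $x_i$'s have, as coefficients, elements of $A$ lying in a subspace of dimension at most $g(N_k\deg w_k)$ for the relevant truncated growth function $g$ of the finitely generated subalgebra of $A$ generated by the $A$-entries of $w_k$; hence the number of scalar relations produced in degree $d$ is bounded by (number of words of degree $d$) times that dimension, i.e.\ by $m^d\cdot g(N_k\cdot\deg w_k)$. Because $A$ has locally subexponential growth, $g(n)\preceq e^{\varepsilon n}$ for every $\varepsilon>0$; choosing $N_k$ large we can make the total contribution $\sum_{a\in R_k\setminus R_{k-1}} t_0^{\deg a}$ smaller than $2^{-k}\cdot\delta$ for a fixed budget $\delta$ (here one uses that $w_k^{N_k}$ has degree $\geq N_k$, so every new relation has degree $\geq N_k$, and $m^d t_0^{d}\cdot e^{\varepsilon N_k\deg w_k}$ summed over $d\geq N_k$ is dominated by a geometric tail times a subexponential factor, which $\to 0$ as $N_k\to\infty$ provided $\varepsilon$ is taken small relative to $\log(1/(mt_0))$ — possible precisely because $mt_0$ can be taken $<1$, or more carefully by an adapted choice, and the subexponential bound lets $\varepsilon$ be arbitrarily small). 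Then $\sum_{a\in R}t_0^{\deg a}\leq\sum_k 2^{-k}\delta=\delta<1$, so the Golod--Shafarevich condition $1-mt_0+\sum t_0^{\deg a}<0$ holds, and $R$ has finitely many elements in each degree since each $R_k\setminus R_{k-1}$ lives in degrees $\geq N_k\to\infty$.

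The main obstacle I anticipate is the bookkeeping in the previous paragraph: I must simultaneously (a) keep the relation series convergent and under the budget $\delta$, (b) ensure each new batch of relations sits in degrees high enough not to disturb earlier estimates, and (c) verify that "pushing down" a relation in $F\langle x\rangle\otimes_F A$ to scalar relations in $F\langle x\rangle$ really does force $w_k^{N_k}=0$ in $F\langle x_1,\dots,x_m\mid R=0\rangle\otimes_F A$ — this last point requires that the ideal generated by the scalar relations, once tensored with $A$, contains the original element, which is a straightforward but slightly delicate flatness/basis computation. The locally subexponential growth hypothesis is used exactly once but crucially: it is what allows the dimension count $g(N_k\deg w_k)$ to be beaten by the geometric decay coming from high degree, and it cannot be dropped (exponential growth of $A$ would overwhelm the series).
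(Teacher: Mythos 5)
Your overall strategy is the paper's: enumerate the countably many elements $f_1,f_2,\dots$ of $F\langle x_1,\dots,x_m\rangle\otimes_F A$, force a high power of each to vanish by adjoining its $A$-coefficients as relations, and use locally subexponential growth to keep $\sum_{a\in R}t_0^{\deg a}$ under budget. (Your worry (c) about ``pushing down'' is a non-issue: over a field, $(F\langle x\rangle/I)\otimes_F A\cong(F\langle x\rangle\otimes_F A)/(I\otimes_F A)$, so killing the coefficients of $w_k^{N_k}$ with respect to a basis of $A$ kills $w_k^{N_k}$ in the quotient tensored with $A$.) The gap is in the quantitative bookkeeping, which as written does not close. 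First, the Golod--Shafarevich condition $1-mt_0+\sum_a t_0^{\deg a}<0$ with a nonnegative sum forces $mt_0>1$, i.e.\ $t_0>1/m$ (the paper takes $1/m<t_0<1$); your opening choice ``$t_0$ small enough that $1-mt_0<-1$'' goes in the wrong direction (it would need $t_0>2/m$, impossible for $m=2$) and, more importantly, collides with what you need later. Second and decisively, you bound the number of scalar relations in $x$-degree $d$ by $m^d\cdot g(N_k\deg w_k)$ and then sum $m^dt_0^d$ over $d\geq N_k$; this geometric series converges only if $mt_0<1$, exactly the opposite of what GS requires. Your parenthetical ``possible precisely because $mt_0$ can be taken $<1$, or more carefully by an adapted choice'' is not a repair but a restatement of the contradiction.

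The factor $m^d$ is an overcount: the relations are indexed by basis elements of a finite-dimensional subspace of $A$, not by words in the $x_i$. The correct count (which is the paper's) is: let $V_k\subset A$ be the span of the finitely many $A$-parts of $w_k$; then $w_k^{N_k}\in F\langle x_1,\dots,x_m\rangle^{\geq N_k}\otimes V_k^{N_k}$, so writing $w_k^{N_k}=\sum_j f_{kj}\otimes v_j$ with $\{v_j\}$ a basis of $V_k^{N_k}$ yields at most $\dim_F V_k^{N_k}$ relations in total (no splitting into homogeneous components is needed; $\deg f_{kj}$ is anyway $\geq N_k$). Their whole contribution to the series is at most $(\dim_F V_k^{N_k})\,t_0^{N_k}$, and since $g_{V_k}(n)=\dim_F V_k^{n}$ is subexponential while $t_0^{n}$ decays exponentially, you can choose $N_k$ so that this is at most $\epsilon_k$, with $\sum_k\epsilon_k<mt_0-1$ fixed in advance. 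With this count (and $1/m<t_0<1$) your argument becomes the paper's; with the $m^d$ count, the estimate fails.
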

\begin{proof}
The algebra $F\langle x_1,\cdots, x_m\rangle\otimes_FA$ is countable. Let
\[F\langle x_1,\cdots,x_m\rangle\otimes_FA=\{f_1,f_2,\cdots\}.\]
Choose $\frac{1}{m}<t_0<1$ and a sequence $\epsilon_1,\epsilon_2,\cdots>0$ such that $\sum\limits_{i=1}^\infty\epsilon_i<\infty$ and $1-mt_0+\sum\limits_{i=1}^\infty\epsilon_i<0$. Choose $i\geq1$. Let $f_i\in\sum\limits_{j=1}^kF\langle x_1,\cdots, x_m\rangle\otimes a_j$, \linebreak $a_j\in A$, $V_i=\sum\limits_jFa_j$. Then for an arbitrary $n\geq1$, we have \linebreak$f_i^n\in F\langle x_1,\cdots,x_m\rangle^n\otimes V_i^n$. Let $g_{V_i}(n)=\dim_FV_i^n$. Since the function $g_{V_i}(n)$ is subexponential, there exists $n_i\geq 1$ such that for all $n\geq n_i$, we have $g_{V_i}(n)t_0^n\leq\epsilon_i$. Let $r=g_{V_i}(n_i)$, let $v_{i1},\cdots, v_{ir}$ be a basis of $V_i^{n_i}$ and let $f_i^{n_i}=\sum\limits_{j=1}^rf_{ij}\otimes v_{ij}$, $\deg f_{ij}\geq n_i$.

Let $R=\{f_{ij}|i\geq1, 1\leq j\leq g_{V_i}(n_i)\}$. The image of an element $f_i$ in $F\langle x_1,\cdots, x_m|R=0\rangle\otimes_FA$ is nilpotent of index $\leq n_i$. Besides,
\vspace{3mm}
\[\sum g_{V_i}(n_i)t_0^{\deg(f_{ij})}\leq \sum g_{V_i}(n_i)t_0^{n_i}\leq \sum\epsilon_i.\vspace{3mm}\]
Hence, $R$ satisfies the Golod-Shafarevich Condition and therefore the algebra $F\langle x_1,\cdots, x_m|R=0\rangle\otimes_FA$ is an infinite dimensional nil algebra. This completes the proof of the lemma.
\end{proof}

\begin{lemma}\label{Lemma9}
Let $F$ be an arbitrary field. There exists an infinite dimensional finitely generated stable nil $F$-algebra.
\end{lemma}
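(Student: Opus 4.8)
The plan is to realize the algebra as a Golod--Shafarevich algebra defined over the prime field, tensored with a sufficiently large countable coefficient ring, so that Lemma~\ref{Lemma8} applies verbatim and stable nilness survives the passage to an arbitrary ground field.

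First I would set up coefficients. Let $F_0\subseteq F$ be the prime subfield, so that $F_0$ is countable, and let $K$ be a countable algebraically closed field of the same characteristic as $F$ whose transcendence degree over $F_0$ is $\aleph_0$ (for instance the algebraic closure of $F_0(t_1,t_2,\dots)$). Since every finitely generated subextension $L$ of $F/F_0$ has finite transcendence degree over $F_0$ while $K$ is algebraically closed of larger transcendence degree, there is an $F_0$-embedding $L\hookrightarrow K$ (send a transcendence basis of $L$ to algebraically independent elements of $K$ and extend). The coefficient algebra will be $A_0:=M_\infty(K)$. As an $F_0$-algebra it is countable dimensional, since $K$ is a countable field; and it is of locally subexponential growth over $F_0$: any finitely generated $F_0$-subalgebra $S$ of $M_\infty(K)$ lies in $M_n(D_0)$ for some $n$ and some finitely generated commutative $F_0$-subalgebra $D_0\subseteq K$ (generated by the finitely many matrix entries involved), and $M_n(D_0)$, being free of finite rank over the affine commutative algebra $D_0$, has finite Gelfand--Kirillov dimension, hence polynomial, so subexponential, growth.

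Next I would apply Lemma~\ref{Lemma8} with $F_0$ and $A_0$ in the roles of $F$ and $A$. It produces $R\subset F_0\langle x_1,\dots,x_m\rangle$ satisfying the Golod--Shafarevich condition for which $G_0:=F_0\langle x_1,\dots,x_m\mid R=0\rangle$ has $G_0\otimes_{F_0}M_\infty(K)$ nil. By the Golod--Shafarevich condition $G_0$ is infinite-dimensional, and it is finitely generated by $x_1,\dots,x_m$. I then set $G:=F\langle x_1,\dots,x_m\mid R=0\rangle$. Since $F\langle x_1,\dots,x_m\rangle=F_0\langle x_1,\dots,x_m\rangle\otimes_{F_0}F$ and the two-sided ideal generated by $R$ over $F$ is the ideal generated by $R$ over $F_0$ extended by scalars, flatness of $F$ over $F_0$ gives $G\cong G_0\otimes_{F_0}F$; thus $G$ is a finitely generated, infinite-dimensional $F$-algebra.

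Finally I would check that $G$ is stable nil. Fix $n\geq 1$ and $a\in M_n(G)\cong M_n(G_0\otimes_{F_0}F)$. Only finitely many scalars from $F$ occur among the entries of $a$, so they lie in a finitely generated subfield $L$ of $F/F_0$; choosing an $F_0$-embedding $\sigma\colon L\hookrightarrow K$ and applying $\mathrm{id}_{G_0}\otimes\sigma$ entrywise — an injection, as $G_0$ is flat over the field $F_0$ — I realize $a$ inside $G_0\otimes_{F_0}M_n(K)\subseteq G_0\otimes_{F_0}M_\infty(K)$, which is nil, so $a$ is nilpotent. Hence $M_n(G)$ is nil for every $n$, and in particular $G$ itself is nil, so $G$ is the desired finitely generated, infinite-dimensional, stable nil $F$-algebra. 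In this plan the only nonroutine point is the growth estimate for $M_\infty(K)$ over $F_0$ that is needed to feed it into Lemma~\ref{Lemma8}; the reduction from an arbitrary ground field to the countable field $F_0$ is then handled entirely by embedding finitely generated subfields of $F$ into the universal coefficient field $K$.
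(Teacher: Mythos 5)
Your argument is correct, and it follows the same overall strategy as the paper --- both apply Lemma~\ref{Lemma8} over the prime subfield $F_0$ to a countable dimensional coefficient algebra of locally subexponential growth, and then transfer stable nilness of the Golod--Shafarevich algebra to the scalar extension over $F$ --- but the transfer mechanism is genuinely different. The paper takes as coefficients $F_0[t_i, i\ge 1]\otimes_{F_0}M_\infty(F_0)$: an element $\sum_i a_i\otimes\alpha_i\otimes y_i$ of $M_n\bigl(F\langle x_1,\dots,x_m\mid R=0\rangle\bigr)$ is the image of the nilpotent element $\sum_i a_i\otimes t_i\otimes y_i$ under the specialization $t_i\mapsto\alpha_i$, so nilpotency is pushed forward along an $F_0$-algebra homomorphism; no field theory enters and the step is a one-liner, uniform in the characteristic. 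You instead take $M_\infty(K)$ for a countable algebraically closed $K$ of infinite transcendence degree over $F_0$, and pull nilpotency back along the injection $\mathrm{id}_{G_0}\otimes\sigma$ induced by an $F_0$-embedding of the finitely generated subfield $L=F_0(\alpha_1,\dots,\alpha_k)\subseteq F$ into $K$; this requires the (standard) embedding of finitely generated extensions into a universal field plus the injectivity remark, but it is equally valid, and your growth estimate for $M_\infty(K)$ over $F_0$ is the right input for Lemma~\ref{Lemma8}. In short, the paper replaces your universal coefficient field by generic commuting variables, trading your slightly heavier field-theoretic reduction for a direct specialization argument; what your packaging buys is a cleaner conceptual statement (``all scalars of $F$ already live in $K$ up to isomorphism over $F_0$''), which would adapt unchanged to situations where one genuinely needs field, rather than polynomial, coefficients.
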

\begin{proof}
Let $F_0$ be the prime subfield of $F$. We will apply Lemma \ref{Lemma8} to the countable dimensional $F_0$-algebra $A=F_0[t_i,i\geq1]\otimes_{F_0}M_\infty(F_0)$ of locally subexponential growth. By Lemma \ref{Lemma8}, there exists a subset $R\subset F_0\langle x_1,\cdots,x_m\rangle$ satisfying the Golod-Shafarevich condition such that the $F_0$-algebra $F_0\langle x_{1},\cdots,x_m|R=0\rangle\otimes_{F_0}A$ is nil. We will show that the $F$-algebra $F\langle x_1,\cdots, x_m|R=0\rangle$ is stable nil. Indeed, we need to check only that for arbitrary elements $a_1,\cdots, a_k\in F_0\langle x_1,\cdots, x_m|R=0\rangle$, arbitrary elements $\alpha_1,\cdots,\alpha_k\in F$ and arbitrary matrices $y_1,\cdots, y_k\in M_n(F_0)$, the tensor $\sum\limits_{i=1}^ka_i\otimes\alpha_i\otimes y_i$ is nilpotent. It follows from the nilpotency of the element $\sum\limits_{i=1}^ka_i\otimes t_i\otimes y_i$ of the algebra $F_0\langle x_1,\cdots, x_m|R=0\rangle\otimes_{F_0}A$. This completes the proof of the lemma.
\end{proof}

\begin{lemma}\label{Lemma10}
Let $A$ be a stable nil (resp. algebraic) algebra. Then for an arbitrary algebra $B$, the algebra $S(A,B)$ is nil (resp. algebraic).
\end{lemma}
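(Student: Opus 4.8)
The statement asserts that if $A$ is stable nil (resp.\ stable algebraic), then $S(A,B)$ is nil (resp.\ algebraic), for any algebra $B$. Recall from \S\ref{Section3} that, after fixing a basis $\{b_i\}_{i\in I}$ of $B$, the algebra $S(A,B)$ is identified with the set of $I\times I$ matrices over $A$ having only finitely many nonzero \emph{rows}. The key structural observation is that any element $\varphi\in S(A,B)$ is supported on finitely many rows, say those indexed by a finite set $J\subset I$ with $|J|=n$. I would first argue that the subalgebra of $S(A,B)$ consisting of all matrices supported on rows indexed by $J$ is contained in (indeed isomorphic to a subalgebra of) $M_n(\hat{A})$ acting appropriately, or more precisely: the product of two such matrices is again supported on rows in $J$, and multiplication only ``sees'' the $J\times J$ block together with the way columns outside $J$ feed back in. So the plan is to show that for $\varphi$ supported on rows $J$, the powers $\varphi^k$ are controlled by the $J\times J$ submatrix $\varphi_{JJ}\in M_n(A)$.

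Concretely, I would establish the identity $(\varphi^k)_{JJ} = (\varphi_{JJ})^k$ for all $k\geq 1$: since $\varphi$ has zero rows outside $J$, in the matrix product $\varphi\cdot\varphi$ only the entries $\varphi_{J,*}$ in the first factor and $\varphi_{*,J}$-into-$J$ entries matter, and iterating, the $(i,j)$ entry of $\varphi^k$ with $i,j\in J$ is a sum over intermediate indices that must all lie in $J$ (because a path from row $i$ to column $j$ passing through a row outside $J$ contributes zero). Hence $\varphi^k=0$ as soon as $(\varphi_{JJ})^k=0$. Now invoke the hypothesis: $A$ stable nil means $M_n(A)$ is nil, so $\varphi_{JJ}\in M_n(A)$ is nilpotent, say $(\varphi_{JJ})^N=0$; therefore $\varphi^N=0$, proving $S(A,B)$ is nil. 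The algebraic case is identical: $A$ stable algebraic means $M_n(A)$ is algebraic, so $\varphi_{JJ}$ satisfies a polynomial $p(t)\in F[t]$ with $p(\varphi_{JJ})=0$; writing $p(t)=t^r q(t)$ with $q(0)\ne 0$ if necessary, or more simply using that $p(\varphi)$ has $JJ$-block equal to $p(\varphi_{JJ})=0$ and all other entries also expressible through the $JJ$-block (since $\varphi$ kills rows outside $J$, one checks $p(\varphi)$ is supported on rows $J$ and its nonzero rows are determined by $p(\varphi_{JJ})$ together with the off-$J$ columns), one concludes $p(\varphi)=0$ after possibly multiplying by a further power of $t$ to absorb the ``tail'' columns.

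The one subtlety — and I expect this to be the main technical point — is handling the columns outside $J$: a matrix supported on rows $J$ may still have nonzero entries in columns indexed by $I\setminus J$, and these do \emph{not} fit inside $M_n(A)$ directly. The resolution is that these extra columns only ever appear in the \emph{last} factor of a product (since subsequent left-multiplications by row-$J$-supported matrices can only produce entries whose columns come from the column-indices of the \emph{right} factor that are "hit," but the left factor's rows are in $J$ so... ) — more carefully, one shows $\varphi^{k+1} = \varphi^k\cdot\varphi$ and the columns of $\varphi^{k+1}$ are among the nonzero columns of $\varphi$, while the ``internal'' structure $\varphi^k$ for $k\geq 2$ has the property that restricting to columns in $J$ gives $(\varphi_{JJ})^k$ exactly. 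Thus if $(\varphi_{JJ})^{N}=0$ then $\varphi^{N}$ has all columns-in-$J$ entries zero, hence $\varphi^{N}$ is supported on rows $J$ and columns $I\setminus J$; one more multiplication $\varphi^{N+1}=\varphi^N\cdot\varphi$ forces the column indices of $\varphi^{N+1}$ to be those of $\varphi$ while the row-to-column paths now pass through the already-annihilated part, giving $\varphi^{N+1}=0$. The algebraic case follows the same bookkeeping with $p(\varphi)$ in place of $\varphi^N$ and an extra factor of $t$. I would write this out as: reduce to $\varphi$ supported on finitely many rows $J$ with $|J|=n$; prove $(\varphi^k)|_{\text{cols }J}=(\varphi_{JJ})^k$ by induction on $k$; apply stable nilpotence/algebraicity of $A$ to $\varphi_{JJ}\in M_n(A)$; then clean up the tail columns with one or two extra multiplications.
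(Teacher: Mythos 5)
Your proposal is correct and follows essentially the same route as the paper: you restrict an element of $S(A,B)$ to the finite block indexed by its nonzero rows, apply stable nilpotence (resp.\ algebraicity) of $M_n(A)$ there, and observe that the leftover part (zero $J\times J$ block, columns outside $J$) dies after one more multiplication. The paper packages this element-wise computation as a homomorphism $S_T(A,B)\to M_{T\times T}(A)$ whose kernel squares to zero, which is just a cleaner statement of your block identity $(\varphi^k)_{JJ}=(\varphi_{JJ})^k$ plus the tail-killing step.
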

\begin{proof}
Recall that $S(A,B)$ consists of $I\times I$ matrices over $A$ with finitely many nonzero rows. For a finite subset $T\subset I$, let $S_T(A,B)$ consist of such matrices that for any $i\in I\setminus T$, the $i^{th}$ row is zero. The algebra $S(A,B)$ is a union of subalgebras $S_T(A,B)$. Consider the mapping $S_T(A,B)\xrightarrow[]{\varphi}M_{T\times T}(A)$. For an arbitrary matrix $Y\in S_T(A,B)$, the matrix $\varphi(Y)$ is the part of $Y$ at the intersection of rows and columns indexed by $T$. Clearly, $\varphi$ is a homomorphism and $(\text{ker}\varphi)^2=(0)$. This implies that the algebra $S_T(A,B)$ is nil (resp. algebraic) and completes the proof of the lemma.
\end{proof}

\begin{proof}[Proof of Theorem \ref{Theorem3}]
Let $B$ be an infinite dimensional finitely generated stable nil algebra of Lemma \ref{Lemma9}. If $A$ is a countable dimensional stable nil algebra, then the algebra $\til{A}=A\wr F[u|u^3=0]$, $A\hookrightarrow\til{A}u$, is also countable dimensional and stable nil due to Lemma \ref{Lemma6}.

In the proof of Theorem \ref{Theorem1}, we embedded the algebra $A$ in a finitely generated subalgebra $C=\langle B,C_\gamma,e_{11}(u)\rangle$ of $\til{A}\wr\hat{B}$. Now we need to show that for an arbitrary $n\geq1$, the matrix algebra $M_n(C)$ is nil. It is easy to see that $M_n(C)$ is a subalgebra of $M_n(\til{A})\wr M_n(\hat{B})$. Moreover,
\[M_n(C)\subseteq M_n(B)+S(M_n(\til{A}),M_n(\hat{B})).\]
By Lemmas \ref{Lemma1} (\ref{Item1}), \ref{Lemma10}, $S(M_n(\til{A}),M_n(\hat{B}))$ is a nil ideal of the algebra $M_n(B)+S(M_n(\til{A}),M_n(\hat{B}))$. The algebra $M_n(B)$ is nil because the algebra $B$ is stable nil. We proved that the algebra $C$ is stable nil.

Suppose now that the algebra $A$ is stable nil, the ground field $F$ is countable, and $GK\dim A\leq d$. Let $B$ be the Lenagan-Smoktunowicz-Young (\cite{18}, \cite{19}) nil algebra, $GK\dim B\leq 3$. Arguing as above, we see that the finitely generated algebra $C$, in which the algebra $A$ is embedded, is nil. By Corollary \ref{Corollary1}, the growth of $C$ is bounded by $n^6w_\gamma(n)$. By Lemma \ref{Lemma7}, a generating linear transformation $\gamma$ can be chosen so that $w_\gamma(n)\leq n^{d+\epsilon_n}$, where $\epsilon_n\rightarrow0$ as $n\rightarrow\infty$. This implies that $GK\dim C\leq d+6$ and finishes the proof of the theorem.
\end{proof}

\begin{remark*}
If we knew that there exists a Lenagan-Smoktunowicz algebra that is stable nil, then we could embed a countable dimensional stable nil algebra of finite Gelfand-Kirillov dimension in a finitely generated stable nil algebra of finite Gelfand-Kirillov dimension.
\end{remark*}

\section{Algebraic Primitive Algebras}\label{Section5}

The purpose of this section is to prove Theorems \ref{Theorem4}, \ref{Theorem5}. Let $A$ be a countable dimensional stable algebraic primitive algebra. Without loss of generality, we will assume that $A\ni1$. Let $B$ be an infinite dimensional finitely generated stable nil algebra of Lemma \ref{Lemma9}. Without loss of generality, we will also assume that $\{b\in B|\dim_FbB<\infty\}=(0)$.

Consider the matrix wreath product $A\wr\hat{B}$ and an element \linebreak$c_\gamma\in Lin(\hat{B},\hat{B}\otimes_FA)$, $c_\gamma(b)=1\otimes\gamma(b)$, where $\gamma:\hat{B}\rightarrow A$ is a generating linear transformation. Choose a basis $\{b_i\}_{i\in N}$ of the algebra $B$ that consists of invertible elements. This basis defines an isomorphism
\[Lin(\hat{B},\hat{B}\otimes_FA)\cong\til{M}_{N\times N}(A).\]

As above, we consider the finitely generated algebra \linebreak$C'=\langle \hat{B}, c_\gamma, e_{11}(1)\rangle$. By Lemma \ref{Lemma5}, $M_\infty(A)\subseteq C'$. Since $M_\infty(A)$ is a left ideal in $\til{M}_{N\times N}(A)$ and $BM_\infty(A)\subseteq M_\infty(A)$, it follows that $M_\infty(A)$ is a left ideal in $C'$.

Arguing precisely as in the proof of Theorem \ref{Theorem3} and using Lemma \ref{Lemma10}, we can show that the algebra $C'$ is stable algebraic.

By Proposition \ref{Proposition3}, the algebra $C'$ is primitive.

By a theorem of V. T. Markov \cite{21}, there exists $n\geq1$ such that the matrix algebra $M_n(C')$ is 2-generated. Since $M_n(M_\infty(A))\cong M_\infty(A)$, the algebra is still $M_\infty$-embedded in $M_n(C')$ as a left ideal. The algebra $M_n(C')$ is still stable algebraic and primitive. This finishes the proof of Theorem \ref{Theorem4}.

Assume now that the ground field $F$ is countable, the algebra $A$ is stable algebraic and primitive, and $GK\dim A\leq d$. For the algebra $B$, we now take the Lenagan-Smoktunowicz-Young finitely generated nil algebra of Gelfand-Kirillov dimension $\leq 3$ (see \cite{18}, \cite{19}).

Then the algebra $C'$ in our construction above is finitely generated and nil (though not necessarily stable nil) and $M_\infty(A)\triangleleft_\ell C'$. By Lemma \ref{Lemma7}, we can choose a generating linear transformation $\gamma$ so that \linebreak$w_\gamma(n)\leq n^{d+\epsilon_n}$, $\epsilon_n\rightarrow0$, $n\rightarrow\infty$. Then by Corollary \ref{Corollary1}, \linebreak$GK\dim C'\leq d+6$, which finishes proof of Theorem \ref{Theorem5}.

\begin{remark*}
If we knew that an infinite dimensional stable nil algebra of finite Gelfand-Kirillov dimension exists, then we could embed an arbitrary countable dimensional stable algebraic primitive algebra of finite Gelfand-Kirillov dimension in a 2-generated stable algebraic primitive algebra of finite Gelfand-Kirillov dimension, thus answering the second part of question 7 in \cite{8}.
\end{remark*}

\section{Examples of finitely generated nil algebras of arbitrary Gelfand-Kirillov dimension $d\geq 8$}\label{Section6}

Everywhere in this section, we assume that the ground field $F$ is countable. Let $B$ be an infinite dimensional graded finitely generated Lenagan-Smoktunowicz-Young nil algebra of Gelfand-Kirillov dimension $\leq 3$ (\cite{18}, \cite{19}). Without loss of generality, we will assume that $\ell(B)=\{b\in B|bB=(0)\}=(0)$.

\begin{lemma}\label{Lemma11}
For arbitrary linearly independent elements \linebreak$b_1,\cdots, b_n\in B$ and arbitrary $s\geq1$, there exists an element $b\in B^s$ such that the elements $b_1b,\cdots, b_nb$ are still linearly independent.
\end{lemma}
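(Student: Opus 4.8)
The plan is to exploit the graded structure of $B$ together with the fact that $\ell(B)=(0)$, and to argue by contradiction on a minimal-dimensional failure. First I would reduce to the homogeneous case: since $B$ is graded, write each $b_i$ as a sum of homogeneous components and observe that it suffices to prove the statement after replacing the $b_i$ by a basis of the (finite-dimensional) span of all their homogeneous components; so without loss of generality $b_1,\dots,b_n$ are homogeneous. Then the map $\mu_b\colon B^s\to B$ (or rather the induced linear map on the span $U=\mathrm{span}_F(b_1,\dots,b_n)$), $x\mapsto$ the tuple $(b_1x,\dots,b_nx)$, is what we want to make injective for some choice of $b\in B^s$. Equivalently, writing $\rho_b\colon U\to B^{\,n}$ for $u\mapsto ub$ (extended linearly, $b_i\mapsto b_ib$), we must find $b\in B^s$ with $\rho_b$ injective on $U$.

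Next I would set up the induction/minimality. Suppose the claim fails for some linearly independent $b_1,\dots,b_n$ and some $s$, with $n$ minimal. Minimality of $n$ forces that for every $b\in B^s$ the kernel of $\rho_b$ on $U=\mathrm{span}(b_1,\dots,b_n)$ is nonzero, and moreover (using minimality again, applied to proper subsets) one shows that a single generic relation must be responsible: there is a fixed hyperplane-type dependency, i.e. after a change of basis of $U$ we may assume that for every $b\in B^s$ one has $b_nb\in\mathrm{span}_F(b_1b,\dots,b_{n-1}b)$, while $b_1,\dots,b_{n-1}$ do stay independent after right multiplication by a suitable $b\in B^s$ (this last uses the minimality hypothesis for $n-1$). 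The key point to extract is then: $b_n b$ is "controlled" by $b_1 b,\dots,b_{n-1}b$ for \emph{all} $b$ in a generating set of the ideal $B^s$, hence for all $b\in B^s$, hence (since $B^{s}B\subseteq B^{s}$ and, more importantly, $B^s$ contains arbitrarily large powers and $\ell(B)=(0)$ lets us clear denominators) for enough $b$ to pin down an actual algebraic identity relating $b_n$ to the $b_i$ inside $B$.

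The main obstacle — and the heart of the argument — is converting this pointwise-dependence into a genuine left-module relation and then deriving a contradiction with $\ell(B)=(0)$ and the graded structure. Concretely: because the $b_i$ are homogeneous and $B^s$ is a graded ideal containing all elements of degree $\geq s$, I would compare, for two independent homogeneous elements $b,b'\in B^s$ of the same degree and a third element $c$, the relations coming from $b$, $b'$, $b+b'$, and $bc$; linear algebra over the (possibly small) coefficient field forces the coefficients of the dependency to be independent of $b$, i.e. there are fixed scalars $\lambda_i$ with $(b_n-\sum\lambda_i b_i)b=0$ for all $b\in B^s$. Setting $z=b_n-\sum\lambda_i b_i\neq 0$ (nonzero since $b_n\notin\mathrm{span}(b_1,\dots,b_{n-1})$) we get $zB^s=(0)$, hence $zB^s B = (0)$ and, since $B$ has no nonzero element annihilating $B$ on the right and $B^s\neq(0)$ generates a large ideal, iterating and using that $B$ is prime-like enough (or directly: $zB\cdot B^{s-1}\cdot B = 0$ and induction on $s$) yields $zB=(0)$, i.e. $z\in\ell(B)=(0)$ — a contradiction. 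The only delicate step is this last descent from $zB^s=0$ to $zB=0$; if $B$ is not known to be a domain one handles it by the observation that $zB^s=(0)$ implies $(zB)B^{s-1}=(0)$ with $zB$ consisting of shorter-reach elements, or simply by choosing $s$ minimal with $zB^s=(0)$ and deriving $z B^{s-1}\subseteq \ell(B)=(0)$, contradicting minimality unless $s=0$, whence $z\in\ell(B)$. I expect verifying that the dependency coefficients $\lambda_i$ can be taken uniform in $b$ — the genericity/base-change step — to be the part requiring the most care, since the ground field may be small; one circumvents small-field issues by first passing to the algebraic closure (which does not affect linear independence or $\ell(B)=(0)$ after the obvious base change $B\otimes_F\bar F$) and descending at the end.
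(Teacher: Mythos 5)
Your overall skeleton (induct/minimize on $n$, compare the dependency relations coming from test elements $b$, $b'$, $b+b'$, force the coefficients to be uniform, conclude $(b_n-\sum_i\lambda_ib_i)\cdot(\text{a power of }B)=(0)$, and contradict $\ell(B)=(0)$ by a descent on the exponent) is the same as the paper's, and your descent from $zB^s=(0)$ to $z\in\ell(B)$ is correct (the paper leaves it implicit). But the step you yourself flag as delicate is exactly where your mechanism breaks. First, minimality does \emph{not} give you that for every $b\in B^s$ the dependency has the shape $b_nb\in\mathrm{span}_F(b_1b,\dots,b_{n-1}b)$: a dependency among $b_1b,\dots,b_nb$ isolates $b_n$ only when $b_1b,\dots,b_{n-1}b$ are already independent, which holds only for special $b$. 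Second, and more seriously, comparing the relations for $b$, $b'$, $b+b'$ with $b,b'\in B^s$ homogeneous \emph{of the same degree} cannot force $\alpha_i=\beta_i=\gamma_i$: the subtraction gives $\sum_i(\gamma_i-\alpha_i)b_ib+\sum_i(\gamma_i-\beta_i)b_ib'=0$, and to split this into its two halves you need $\mathrm{span}_F(b_1b,\dots,b_{n-1}b)\cap\mathrm{span}_F(b_1b',\dots,b_{n-1}b')=(0)$; with $\deg b=\deg b'$ all these products sit in the same graded components and the two spans can be arbitrarily entangled (they may even coincide), so no conclusion follows. The paper's fix is the opposite of ``same degree'': using $\bigcap_{i\geq1}B^i=(0)$ it chooses $t\geq s$ with $\mathrm{span}_F(b_1b,\dots,b_{n-1}b)\cap B^t=(0)$ and takes the second test element $b'\in B^t$, so the relation splits and the coefficients match.

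You are also missing the ingredient that upgrades the relation from the special elements $b,b'$ to \emph{all} of a power of $B$: the paper applies the elementary fact that for arbitrary $v_1,\dots,v_{n-1}$ and linearly independent $w_1,\dots,w_{n-1}$ there is a scalar $\xi$ with $v_i+\xi w_i$ independent, taking $v_i=b_ib''$, $w_i=b_ib'$ for an arbitrary $b''\in B^t$; running the comparison with $b''+\xi b'$ in place of $b'$ and subtracting $(b_n-\sum_i\alpha_ib_i)b'=0$ yields $(b_n-\sum_i\alpha_ib_i)b''=0$ for every $b''\in B^t$, which is what feeds the final contradiction. Without such a perturbation argument, ``linear algebra forces the coefficients to be independent of $b$'' is an assertion, not a proof. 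Two smaller remarks: the reduction to homogeneous $b_i$ is harmless but unnecessary; and the proposed passage to $B\otimes_F\bar F$ to dodge small fields has its own descent problem (non-vanishing of the relevant determinant over $\bar F$ does not produce an $F$-rational witness when $F$ is finite), so it does not by itself repair the small-field issue — though note that the paper's perturbation lemma likewise needs enough scalars in $F$.
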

\begin{proof}
We will induct on $n$. For $n=1$, the assertion of the lemma means that $b_1B^s\neq(0)$, which follows from the assumption on the left annihilator of $B$.

Suppose that the assertion is true for $n-1$. Choose an element $b\in B^s$ such that the elements $b_1b, \cdots, b_{n-1}b$ are linearly independent. Since $\bigcap\limits_{i\geq1}B^i=(0)$, it follows that there exists $t\geq s$ such that $\text{span}_F(b_1b,\cdots, b_{n-1}b)\cap B^t=(0)$.

Again, by the inductive assumption, we can choose an element \linebreak$b'\in B^t$ such that $b_1b',\cdots, b_{n-1}b'$ are linearly independent elements. Assuming that the assertion of the lemma is wrong, there exist scalars $\alpha_1,\cdots,\alpha_{n-1},\beta_1,\cdots,\beta_{n-1}\in F$ such that
\[b_nb=\sum\limits_{i=1}^{n-1}\alpha_ib_ib, \hspace{1cm} b_nb'=\sum\limits_{i=1}^{n-1}\beta_ib_ib'.\]
Since the elements $b_1(b+b'),\cdots, b_{n-1}(b+b')$ are linearly independent, there exist scalars $\gamma_1,\cdots,\gamma_{n-1}\in F$ such that
\[b_n(b+b')=\sum\limits_{i=1}^{n-1}\gamma_ib_i(b+b').\]
Subtracting the first two equations from the third, we get
\[\sum\limits_{i=1}^{n-1}(\gamma_i-\alpha_i)b_ib+\sum\limits_{i=1}^{n-1}(\gamma_i-\beta_i)b_ib'=0.\]
It implies that $\alpha_i=\beta_i=\gamma_i$, $1\leq i\leq n-1$.

We will use the following elementary statement from Linear Algebra:\\
Let $V$ be a vector space over an infinite field. Let $v_1,\cdots,v_n\in V$ be arbitrary elements and let $w_1,\cdots,w_n\in V$ be linearly independent elements. Then there exists a scalar $\xi\in F$ such that the elements $v_i+\xi w_i$, $1\leq i\leq n$, are linearly independent.

This implies that for an arbitrary element $b''\in B^t$, there exists a scalar $\xi\in F$ such that the elements $b_i(b''+\xi b')$, $1\leq i\leq n-1$, are linearly independent. Taking $b''+\xi b'$ instead of $b'$, we get
\[(b_n-\sum\limits_{i=1}^{n-1}\alpha_ib_i)(b''+\xi b')=0,\]
and therefore
\[(b_n-\sum\limits_{i=1}^{n-1}\alpha_ib_i)b''=0, \hspace{1cm} (b_n-\sum\limits_{i=1}^{n-1}\alpha_ib_i)B^t=(0).\]
This contradicts the assumption that the left annihilator of $B$ is zero and completes the proof of the lemma.
\end{proof}

Since the ground field $F$ is countable, it follows that the algebra $B$ is countable.

Let $\bcal$ be the set of all nonempty finite sequences of linearly independent elements of $B$, $\text{card }\bcal=\aleph_0$.

Let $u_1,u_2,\cdots$ be a sequence of elements of $\bcal$ such that each element of $\bcal$ occurs in this sequence infinitely many times.

The algebra $B$ is generated by the homogeneous component of degree 1, $V=B_1$, $V^n=\sum\limits_{i=1}^nB_i$.

We will construct an increasing sequence of integers \linebreak$0=n_0<n_1<n_2<\cdots$. Suppose that $k\geq2$ and $n_0,n_1,\cdots,n_{k-1}$ have already been constructed. Let $u_k=(b_1,\cdots,b_m)\in\bcal$, the elements $b_1,\cdots, b_m$ are linearly independent. By Lemma \ref{Lemma11}, there exists an element $b\in B^{n_{k-1}+1}$ such that the elements $b_1b,\cdots,b_mb$ are linearly independent. Choose $n_k$ such that $n_k>e^{n_{k-1}}$, $n_k>e^{e^k}$, and $b_1b,\cdots, b_mb\in V^{n_{k-1}}$. This completes the construction of the sequence $0=n_0<n_1<n_2<\cdots$.

For an arbitrary $\alpha\geq2$, W. Bohro and H. P. Kraft \cite{9} constructed a graded $F$-algebra $R=\sum\limits_{i=1}^\infty R_i$ generated by two elements $x,y\in R_1$, such that for any $\epsilon>0$ we have
\[n^{\alpha-\epsilon}\leq\dim_F\sum\limits_{i=1}^nR_i\leq n^{\alpha+\epsilon}\]
for all sufficiently large $n$. Let $f(n)=\dim_F\sum\limits_{i=1}^nR_i$.

Let $J$ be a graded ideal of the free associative algebra $F\langle x,y\rangle$ such that $F\langle x,y\rangle/J\cong R$. Now we are ready to introduce a countable dimensional locally nilpotent algebra $A$. Let $X=\{x_1,x_2,\cdots\}$, \linebreak$Y=\{y_1,y_2,\cdots\}$. Consider the algebra $A$ presented by the set of generators $X\cup Y$ and the following set of relations:
\begin{enumerate}[(1)]
\item \label{Enum1} $x_ix_jx_k=0$, where $i,j,k$ are arbitrary, distinct integers;
\item \label{Enum2} $J(x_i,x_j)=(0)$, $i\neq j$, where $J(x_i,x_j)$ is the image of the ideal $J$ under the homomorphism $F\langle x,y\rangle\rightarrow F\langle X,Y\rangle$, $x\rightarrow x_i$, $y\rightarrow x_j$;
\item \label{Enum3} $id_{F\langle X,Y\rangle}(x_i)^{n_{i+3}}=(0)$;
\item \label{Enum4} $[X,y_i]=[Y,y_i]=(0)$, $i\geq 1$;
\item \label{Enum5} $y_i^2=0$, $i\geq 1$.
\end{enumerate}

Let $g_k(n)=\dim_F\sum\limits_{\mu=1}^n(\sum\limits_{i=1}^kFx_i)^\mu$.

\begin{lemma}\label{Lemma12}
Suppose that $n_k\leq n< n_{k+2}$. Then
\[f(n)\leq g_k(n)\leq \binom{k}{2}f(n).\]
\end{lemma}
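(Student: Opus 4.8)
The plan is to analyze the algebra $A$ structurally using its defining relations, and then to identify the subalgebra generated by $x_1,\dots,x_k$ with (a truncation of) a suitable "sum" of copies of $R = F\langle x,y\rangle/J$. First I would observe that the subalgebra $A_k := \langle x_1,\dots,x_k\rangle \le A$ is governed entirely by relations (\ref{Enum1}), (\ref{Enum2}), (\ref{Enum3}) restricted to indices $\le k$, since relations (\ref{Enum4}), (\ref{Enum5}) involve the $y_i$ and do not interfere. Relation (\ref{Enum1}) says that any monomial in $x_1,\dots,x_k$ involving three distinct indices vanishes; hence every nonzero monomial in $A_k$ uses at most two distinct variables $x_i, x_j$. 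A monomial involving only one variable $x_i$ is a power $x_i^\mu$, which survives up to exponent $n_{i+3}$ by (\ref{Enum3}) (and the one-variable part of (\ref{Enum2}), noting $J(x_i,x_i)$ vanishes $x_i$ beyond degree related to $R$ with $y=x$); a monomial involving exactly two variables $x_i, x_j$ lives in the algebra $F\langle x_i, x_j\rangle / J(x_i,x_j) \cong R$.

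The key step is then the combinatorial count. For the upper bound $g_k(n) \le \binom{k}{2} f(n)$: the degree-$\le n$ part of $A_k$ decomposes (as a vector space, additively over monomials) according to which set of at most two indices appears. There are $\binom{k}{2}$ unordered pairs $\{i,j\}$ with $i\neq j$, and for each such pair the two-variable monomials of degree $\le n$ span a space isomorphic to a quotient of $\sum_{\mu=1}^n R_\mu$, hence of dimension $\le f(n)$. (Purely one-variable monomials $x_i^\mu$ can be absorbed into these $\binom{k}{2}$ pieces since each $x_i$ appears in some pair.) Summing gives $g_k(n)\le \binom{k}{2} f(n)$. For the lower bound $f(n)\le g_k(n)$: pick a single pair, say $\{x_1, x_2\}$, and note that the two-variable monomials in $x_1, x_2$ inside $A_k$ map onto $\sum_{\mu=1}^n R_\mu$ — this requires that relation (\ref{Enum3}) not kill anything in degree $\le n$, i.e. that $n < n_{1+3} = n_4$; here the hypothesis $n < n_{k+2}$ comes in, since $k \ge 1$ forces $n_{k+2}\ge n_4$... wait, more carefully: I need $n < n_{i+3}$ for the relevant indices, and since $n < n_{k+2} \le n_{k+3}$ and we are using variables among $x_1,\dots,x_k$, we have $n_{i+3} \ge n_{k+2} > n$ only if $i \ge k-1$; so the lower bound should use the pair $\{x_{k-1}, x_k\}$ (or a pair of large indices) to be safe, ensuring relation (\ref{Enum3}) is vacuous in degree $\le n$ for those variables. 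Then the natural surjection $F\langle x,y\rangle \to \langle x_{k-1}, x_k\rangle$, $x\mapsto x_{k-1}, y\mapsto x_k$, factors through $R$ and has image of dimension exactly $f(n)$ in degrees $\le n$, giving $f(n) \le g_k(n)$.

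The left inequality $n_k \le n$ in the hypothesis is what guarantees $f$ is already in its polynomial regime (via the Bohro–Kraft estimate $n^{\alpha-\epsilon}\le f(n) \le n^{\alpha+\epsilon}$), though strictly that is not needed for the algebraic inequality itself — it is the right inequality $n < n_{k+2}$ that is essential, as it ensures the degree-$n$ truncation of $A_k$ has not yet been affected by the nilpotency relations (\ref{Enum3}) for the high-index generators $x_{k-1}, x_k$. The main obstacle I anticipate is bookkeeping the interaction of relation (\ref{Enum3}) — which imposes $\mathrm{id}(x_i)^{n_{i+3}} = 0$ — with the two-variable pieces: one must check that within degree $\le n < n_{k+2}$, for a pair $\{i,j\}$ with $\{i,j\}\subseteq\{1,\dots,k\}$, the ideal power relation does not yet truncate $R$, which needs $n < n_{i+3}$ for at least one admissible pair; choosing the pair with the two largest indices available and invoking $n_{k+2} \le n_{i+3}$ handles it, but one should state precisely why the pieces for small-index pairs, which may be truncated, still only contribute dimension $\le f(n)$ each (they are quotients of the untruncated piece, so the upper bound is unaffected). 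Writing this out cleanly — choosing the right representative pair for the lower bound and confirming the additive monomial decomposition for the upper bound — is the crux.
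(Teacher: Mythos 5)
Your argument is essentially the paper's own proof: the upper bound comes from the same decomposition of $F\langle x_1,\dots,x_k\rangle$ into the $\binom{k}{2}$ two-variable subalgebras $\langle x_i,x_j\rangle$ modulo the defining ideal, and the lower bound uses exactly the pair $(x_{k-1},x_k)$, for which the relations of type (\ref{Enum3}) only occur in degrees $\geq n_{k+2}>n$, so that $\langle x_{k-1},x_k\rangle$ contributes a full copy of $\sum_{i\leq n}R_i$ of dimension $f(n)$. This matches the paper's containment $\langle x_{k-1},x_k\rangle\cap\til{J}\subseteq J(x_{k-1},x_k)+\sum_{i\geq n_{k+2}}F\langle X,Y\rangle_i$, so the proposal is correct and takes the same route.
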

\begin{proof}
Let $\til{J}$ be the ideal of the free algebra $F\langle X,Y\rangle$ generated by (\ref{Enum1})-(\ref{Enum5}). We have
\[\langle x_{k-1},x_k\rangle\cap\til{J}\subseteq J(x_{k-1},x_k)+\sum\limits_{i=n_{k+2}}^\infty F\langle X,Y\rangle_i.\]
Hence,
\[g_k(n)\geq\dim_F\sum\limits_{i=1}^n(\langle x_{k-1},x_k\rangle/J(x_{k-1},x_k))_i=f(n).\]
On the other hand, relation (\ref{Enum3}) implies
\[F\langle x_1,\cdots, x_k\rangle\subseteq\sum\limits_{1\leq i\neq j\leq k}\langle x_i,x_j\rangle+\til{J}.\]
This implies that $g_k(n)\leq \binom{k}{2}f(n)$ and completes the proof of the lemma.
\end{proof}

Now we will define a generating dense linear transformation \linebreak$\gamma:\hat{B}\rightarrow A$. Recall that $V=B_1$. We let $V^0=F\cdot1$ and \linebreak$\gamma(1)=0$. Suppose that $\gamma:V^{n_{k-1}}\rightarrow A$ has already been defined. Let \linebreak$u_k=(b_1,\cdots,b_m)\in\bcal$. Recall that in the course of choosing the numbers $n_k$, we first chose an element $b\in B^{n_{k-1}+1}$ such that $b_1b,\cdots, b_mb$ are linearly independent and then choose $n_k$ large enough so that $b_1b,\cdots, b_mb\in V^{n_k-1}$. Choose an element $v_k\in V^{n_k}\setminus V^{n_k-1}$ and a subspace $T_k\subset V^{n_k}$ so that $V^{n_k}=V^{n_{k-1}}\oplus T_k\oplus Fb_1b\oplus\cdots\oplus Fb_mb\oplus Fv_k$ is a direct sum of subspaces.

Define $\gamma(T_k)=0$, $\gamma(b_1b)=\cdots=\gamma(b_{m-1}b)=0$, $\gamma(b_mb)=y_k$,\linebreak $\gamma(v_k)=x_k$.

It is clear that $\gamma$ is a generating linear transformation. We will show that $\gamma$ is dense. Choose an element $u=(b_1,\cdots,b_m)\in\bcal$ and a nonzero element $a\in A$. The linearly independent set $u$ occurs infinitely many times in the sequence $u_1,u_2,\cdots$. Choose $k\geq1$ such that\linebreak $u_k=(b_1,\cdots,b_m)$ and $y_k$ does not occur in $a$. Then there exists an element $b\in B$ such that $\gamma(b_1b)=\cdots=\gamma(b_{m-1}b)=0$, $\gamma(b_mb)=y_k$, $ay_k\neq0$.

As above, consider the element $c_\gamma\in Lin(\hat{B},\hat{B}\otimes_FA)$, $c_\gamma(b)=1\otimes\gamma(b)$ for $b\in B$. Let $C=\langle B,c_\gamma\rangle$.

\begin{proof}[Proof of Theorem 6.1]
We proved in \S\ref{Section4} that $C$ is a nil algebra. We will show that $GK\dim C=2GK\dim(B)+\alpha$.

Indeed, by Lemma \ref{Lemma4}, $g_C(n)\sim g_B(n)^2w_\gamma(n)$. We will estimate $w_\gamma(n)$.

Let $n_k\leq n<n_{k+1}$. Then
\[W_n=\text{span}(\gamma(V^{i_1})\cdots\gamma(V^{i_r}),i_1+\cdots+i_r\leq n).\]
Since $\gamma(V^{n_i})\subseteq\gamma(V^{n_{i-1}})+Fx_i+Fy_i$ for $i\geq1$, and all $i_1,\cdots,i_r$ are smaller than $n_{k+1}$, it follows that
\[\gamma(V^{i_1})+\cdots+\gamma(V^{i_r})\subseteq\sum\limits_{i=1}^kFx_i+\sum\limits_{i=1}^{k+1}Fy_i.\]

Then $w_\gamma(u)\leq g_k(n)2^{k+1}$. By Lemma \ref{Lemma12}, for any $\epsilon>0$ and a sufficiently large $n$, we have $g_k(n)\leq n^{\alpha+\epsilon}\binom{k}{2}$. Since $n\geq n_k>e^{e^k}$, it follows that $k<\ln(\ln n)$. Hence, for any $\epsilon'$, $0<\epsilon<\epsilon'$, we have $w_\gamma(n)\preceq n^{\alpha+\epsilon'}$.

On the other hand, $x_1,\cdots, x_{k-1}\in V^{n_{k-1}}$. Hence, $w_\gamma(n)\geq g_{k-1}([\frac{n}{n_{k-1}}])$. We have $n_{k-1}\leq[\frac{n}{n_{k-1}}]<n_{k+1}$. Hence, by Lemma \ref{Lemma12},
\[g_{k-1}([\frac{n}{n_{k-1}}])\geq f([\frac{n}{n_{k-1}}]).\]
From $n\geq n_k\geq e^{n_{k-1}+1}$, we conclude that $n_{k-1}\leq \ln n-1$ and therefore $[\frac{n}{n_{k-1}}]\geq \frac{n}{\ln n}$. Hence, for an arbitrary $\epsilon>0$ for a sufficiently large $n$, we have $w_\gamma(n)\geq(\frac{n}{\ln n})^{\alpha-\epsilon}$. This implies that for any $\epsilon'>\epsilon$, we have $w_\gamma(n)\geq n^{\alpha-\epsilon'}$. This implies that $GK\dim C=2GK\dim(B)+\alpha$ and completes the proof of the theorem.
\end{proof}

\section*{Acknowledgement}

The fourth author gratefully acknowledges the support form the NSF.

\bibliographystyle{amsplain}

\bibliography{MWPOAAETbib}

\endnotetext[1]{Department of Mathematics, King Abdulaziz University, Jeddah, SA,\\
E-mail address: analahmadi@kau.edu.sa; hhaalsalmi@kau.edu.sa;}

\endnotetext[2]{Department of Mathematics, Ohio University, Athens, USA,\\
E-mail address: jain@ohio.edu;}

\endnotetext[3]{Department of Mathematics, University of California, San Diego, USA\\
E-mail address: ezelmano@math.ucsd.edu}

\theendnotes

\end{document}